\journal{Arxiv}              
\newcolumntype{d}[1]{D{.}{.}{#1}}
\let\oldref\ref
\renewcommand{\ref}[1]{(\oldref{#1})}
\newtheorem{theorem}{Theorem}
\newtheorem{lemma}{Lemma}
\newtheorem{corollary}[theorem]{Corollary}
\newdefinition{definition}{Definition}
\numberwithin{equation}{section}
\def\N{\mathbb N}
\def\F{\mathcal F}
\def\d{\mathrm d}
\def\R{\mathbb R}
\newcommand{\norm}[1]{\left\lVert#1\right\rVert}
\newcommand{\abs}[1]{\left\lvert#1\right\rvert}
\begin{document}

\begin{frontmatter}
\title{\textbf{Quantile Regression Neural Networks: A Bayesian Approach}
}

\author[msu]{S.R.~Jantre}
\ead{jantresa@msu.edu}
\author[msu]{S.~Bhattacharya}
\ead{bhatta61@msu.edu}
\author[msu]{T.~Maiti\corref{cor1}}
\ead{maiti@msu.edu}

\cortext[cor1]{Corresponding author}

\address[msu]{Department of Statistics and Probability, Michigan State University, MI, USA  48824}


\begin{abstract}
This article introduces a Bayesian neural network estimation method for quantile regression assuming an asymmetric Laplace distribution (ALD) for the response variable. It is shown that the posterior distribution for feedforward neural network quantile regression is asymptotically consistent under a misspecified ALD model. This consistency proof embeds the problem from density estimation domain and uses bounds on the bracketing entropy to derive the posterior consistency over Hellinger neighborhoods. This consistency result is shown in the setting where the number of hidden nodes grow with the sample size. The Bayesian implementation utilizes the normal-exponential mixture representation of the ALD density. The algorithm uses Markov chain Monte Carlo (MCMC) simulation technique - Gibbs sampling coupled with Metropolis-Hastings algorithm. We have addressed the issue of complexity associated with the afore-mentioned MCMC implementation in the context of chain convergence, choice of starting values, and step sizes.  We have illustrated the proposed method with simulation studies and real data examples. 
\end{abstract}

\begin{keyword} Asymmetric Laplace density, Bayesian quantile regression, Bracketing entropy, Feedforward neural network, Hellinger distance, MCMC, Posterior consistency, Sieve asymptotics.
\end{keyword}
\end{frontmatter}




\section{Introduction} 

\label{Section1} 


Quantile regression (QR), proposed by \citet{Koenker-Basset-1978}, models  conditional quantiles of the dependent variable as a function of the covariates. The method supplements the least squares regression and provides a more comprehensive picture of the entire conditional distribution. This is particularly useful when the relationships in the lower and upper tail areas are of greater interest. Quantile regression has been extensively used in wide array of fields such as economics, finance, climatology, and medical sciences, among others \citep{Koenker-2005}. Quantile regression estimation requires specialized algorithms and reliable estimation techniques which are available in both frequentist and Bayesian literature. Frequentist techniques include simplex algorithm \citep{Dantzig-1963} and the interior point algorithm \citep{Karmarkar-1984}, whereas Bayesian technique using Markov chain Monte Carlo (MCMC) sampling was first proposed by \citet{Yu-Moyeed-2001}. Their approach employed the asymmetric Laplace distribution (ALD) for the response variable, which connects to frequentist quantile estimate, since its maximum likelihood estimates are equivalent to the quantile regression using check-loss function \citep{Koenker-Machado-1999}.  Recently, \citet{Kozumi-Kobayashi-2011} proposed a Gibbs sampling algorithm, where they exploit the normal-exponential mixture representation of the asymmetric Laplace distribution which considerably simplified the computation for Bayesian quantile regression models.

Artificial neural networks are helpful in estimating possibly non-linear models without specifying an exact functional form. The neural networks which are most widely used in engineering applications are the single hidden-layer feedforward neural networks. These networks consist of a set of inputs $\bm{X}$, which are connected to each of $k$ hidden  nodes, which, in turn, are connected to an output layer $(\bm{O})$. In a typical single layer feedforward neural network, the outputs are computed as 
\begin{equation*}
\bm{O}_i=b_0 + \sum_{j=1}^k b_j \psi \left( c_{j0} + \sum_{h=1}^{p} \bm{X}_{ih} c_{jh} \right)
\end{equation*}
where, $c_{jh}$ is the  weight from input $\bm{X}_{ih}$ to the hidden node $j$. Similarly, $b_j$ is the weight associated with the hidden unit $j$. The $c_{j0}$ and $b_0$ are the biases for the hidden nodes and the output unit. The function $\psi(.)$ is a nonlinear activation function. Some common choices of $\psi(.)$ are the sigmoid and the hyperbolic tangent functions. The interest in neural networks is motivated by the universal approximation capability of feedforward neural networks (FNNs) \citep{Cybenko-1989,Funahashi-1989,Hornik-et-al-1989}. According to these authors, standard feedforward neural networks with as few as one hidden layer whose output functions are sigmoid functions are capable of approximating any continuous function to a desired level of accuracy, if the number of hidden layer nodes are sufficiently large. \citet{Taylor-2000} introduced a practical implementation of quantile regression neural networks (QRNN) to combine the approximation ability of neural networks with robust nature of quantile regression. Several variants of QRNN have been developed such as composite QRNN where neural networks are extended to the linear composite quantile regression \citep{Xu-et-al-2017} and later \citet{Cannon-2018} introduced monotone composite QRNN which guaranteed the non-crossing of regression quantiles. 

Bayesian neural network learning models find the predictive distributions for the target values in a new test case given the inputs for that case as well as inputs and targets for the training cases. Early work of \citet{Buntine-Weigend-1991} and \citet{Mackay-1992} has inspired widespread research in Bayesian neural network models. Their work implemented Bayesian learning using Gaussian approximations. Later, \citet{Neal-1996} applied Hamiltonian Monte Carlo in Bayesian statistical applications. Further, Sequential Monte Carlo techniques applied to neural network architectures are described in \citet{DeFreitas-2001}. A detailed review of MCMC algorithms applied to neural networks is presented by \citet{Titterington-2004}. Although Bayesian neural networks have been widely developed in the context of mean regression models, there has been limited or no work available on its development in connection to quantile regression both from a theoretical and implementation standpoint. We also note that the literature on MCMC methods applied to neural networks is somewhat limited due to several challenges including lack of parameter identifiability, high computational costs, and convergence failures \citep{Papamarkou-et-al-2019}. 

In this article, we bridge the gap between Bayesian neural network learning and quantile regression modeling. The proposed Bayesian quantile regression neural network (BQRNN) uses a single hidden layer FNN with sigmoid activation function, and a linear output unit. The Bayesian procedure has been implemented using Gibbs sampling combined with random walk Metropolis-Hastings algorithm. Further the posterior consistency of our method has been established under the framework of \citet{Lee-2000} and \citet{Sriram-et-al-2013}. The former has shown posterior consistency in the case of Bayesian neural network for mean models while the later have shown it in the case of Bayesian quantile regression (BQR). We follow the frameworks of \citet{Lee-2000} to prove the consistency which are built using the results presented in \citet{Funahashi-1989}, \citet{Hornik-et-al-1989} and others. Their approach borrows many of the ideas for establishing consistency in the context of density estimation from \citet{Barron-et-al-1999}. We handle the case of ALD responses with the help of \citet{Sriram-et-al-2013}'s handling of ALD in BQR scenario. 

The rest of this article is organized as follows. Section \ref{Section2} introduces quantile regression and its Bayesian formulation by establishing relationship between quantile regression and asymmetric Laplace distribution. In Section \ref{Section3}, we discuss Bayesian quantile regression neural network model and the specific priors used in this study. Further, we propose a hierarchical BQRNN model and introduce a MCMC procedure which couples Gibbs sampling with random walk Metropolis-Hastings algorithm. We conclude this section with an overview of the posterior consistency results for our model. Section \ref{Section4} presents Monte Carlo simulation studies and real world applications. A brief discussion and conclusion is provided in Section \ref{Section5}. Detailed proofs of theorems and their corresponding lemmas are presented in the Appendix.  
\section{Bayesian Quantile Regression} 

\label{Section2} 


Quantile regression offers a practically important alternative to mean regression by allowing the inference about the conditional distribution of the response variable through modeling of its conditional quantiles. Let $Y$ and $\bm{X}$ denote the response and the predictors respectively and $\tau \in (0,1)$ be the quantile level of the conditional distribution of $Y$ and $F(.)$ be the cumulative distribution function of $Y$, then a linear conditional quantile function of $Y$ is denoted as follows
\begin{equation*} 
Q_{\tau}(y_i |\bm{X_i} = \bm{x_i}) \equiv F^{-1}(\tau) = \bm{x_i}^T {\beta}(\tau), \hspace{1cm} i = 1,\dots,n,
\end{equation*} 
where ${\beta}(\tau) \in \mathbb{R}^{p}$ is a vector of quantile specific regression coefficients of length $p$. The aim of quantile regression is to estimate the conditional quantile function $Q(.)$.

Let us consider the following linear model in order to formally define the quantile regression problem,
\begin{equation} \label{quant_lin_model}
Y = \bm{X}^T \beta(\tau) + \varepsilon, 
\end{equation}
where $\varepsilon$ is the error vector restricted to have its $\tau^\text{th}$ quantile to be zero, i.e. $\int _{-\infty} ^ 0 f({\varepsilon}_i) d{\varepsilon}_i = \tau$. The probability density of this error is often left unspecified in the classical literature. The estimation through quantile regression proceeds by minimizing the following objective function
\begin{equation} \label{obj_fun}
\min_{{\beta}(\tau) \in \mathbb{R}^p} \sum_{i=1}^n \rho_{\tau} (y_i - \bm{x_i}^T \beta(\tau)) 
\end{equation}
where $\rho_{\tau}(.)$ is the check function or quantile loss function with the following form:
\begin{equation} \label{check_fun}
\rho_{\tau}(u) = u . \{\tau - I(u<0)\},
\end{equation}
$I(.)$ is an indicator function. 
Classical methods employ linear programming techniques such as the simplex algorithm, the interior point algorithm, or the smoothing algorithm  to obtain quantile regression estimates for ${\beta}({\tau})$ \citep{Madsen-Nielsen-1993,Chen-2007}. The statistical programming language R makes use of \href{https://cran.r-project.org/web/packages/quantreg/quantreg.pdf}{\texttt{quantreg}} package \citep{Koenker-2017} to implement quantile regression techniques whilst confidence intervals are obtained via bootstrap \citep{Koenker-2005}. 

Median regression in Bayesian setting has been considered by \citet{Walker-Mallick-1999} and \citet{Kottas-Gelfand-2001}. In quantile regression, a link between maximum-likelihood theory and minimization of the sum of check functions, \autoref{obj_fun}, is provided by asymmetric Laplace distribution (ALD) \citep{Koenker-Machado-1999,Yu-Moyeed-2001}. This distribution has location parameter $\mu$, scale parameter $\sigma$ and skewness parameter $\tau$. Further details regarding the properties of this distribution are specified in \citet{Yu-Zhang-2005}. If $Y {\sim}$ ALD$(\mu,\sigma,p)$, then its probability distribution function is given by
\begin{equation*}
f(y|\mu,\sigma,\tau) = \frac{\tau(1-\tau)}{\sigma} \exp \left \{- \rho_{\tau} \left (\frac{y-\mu}{\sigma}\right ) \right \} 
\end{equation*}
As discussed in \citet{Yu-Moyeed-2001}, using the above skewed distribution for errors provides a way to implement Bayesian quantile regression effectively. According to them, any reasonable choice of prior, even an improper prior, generates a posterior distribution for $\beta(\tau)$. Subsequently, they made use of a random walk Metropolis Hastings algorithm with a Gaussian proposal density centered at the current parameter value to generate samples from analytically intractable posterior distribution of $\beta(\tau)$. 

In the aforementioned approach, the acceptance probability depends on the choice of the value of $\tau$, hence the fine tuning of parameters like proposal step size is necessary to obtain the appropriate acceptance rates for each $\tau$. \citet{Kozumi-Kobayashi-2011} overcame this limitation and showed that Gibbs sampling can be incorporated with AL density being represented as a mixture of normal and exponential distributions. Consider the linear model from \autoref{quant_lin_model}, where $\varepsilon_i \sim$ ALD$(0,\sigma,\tau)$, then this model can be written as
\begin{equation} \label{quant_lin_ald_model}
y_i = \bm{x_i}^T {\beta}(\tau) + \theta v_i + \kappa \hspace{0.1cm}\sqrt[]{\sigma v_i} u_i, \hspace{1cm} i = 1,\dots,n,
\end{equation}
where, $u_i$ and $v_i$ are mutually independent, with $u_i \sim $N$(0,1), v_i \sim \mathcal{E}(1/\sigma)$ and $\mathcal{E}(1/\sigma)$ is the exponential distribution with mean $\sigma$. The $\theta$ and $\kappa$ constants in  \autoref{quant_lin_ald_model} are given by 
\begin{equation*}
\theta = \frac{1-2\tau}{\tau(1-\tau)} \quad and \quad \kappa = \sqrt[]{\frac{2}{\tau(1-\tau)}}
\end{equation*}
Consequently, a Gibbs sampling algorithm based on normal distribution can be implemented effectively. Currently, \href{https://cran.r-project.org/web/packages/Brq/Brq.pdf}{\texttt{Brq}} \citep{Alhamzawi-2018} and \href{https://cran.r-project.org/web/packages/bayesQR/bayesQR.pdf}{\texttt{bayesQR}} \citep{Benoit-et-al-2017} packages in R provide Gibbs sampler for Bayesian quantile regression. We are employing the same technique to derive Gibbs sampling steps for all except hidden layer node weight parameters for our Bayesian quantile regression neural network model. 
 
\section{Bayesian Quantile Regression Neural Network} 

\label{Section3} 


\subsection{Model}
In this paper we focus on feedforward neural networks with a single hidden layer of units with logistic activation functions, and a linear output unit. Consider the univariate response variable $Y_i$ and the covariate vector $\bm{X_i}$ $(i = 1,2,\dots,n)$. Further, denote the number of covariates by $p$ and the number of hidden nodes by $k$ which is allowed to vary as a function of $n$. Denote the input weights by $\gamma_{jh}$ and the output weights by $\beta_j$. Let, $\tau \in (0,1)$ be the quantile level of the conditional distribution of $Y_i$ given $\bm{X_i}$ and keep it fixed.  Then, the resulting conditional quantile function is denoted as follows  
\begin{equation} \label{quant_fun}
Q_{\tau}(y_i |\bm{X_i} = \bm{x_i}) = \beta_0 + \sum_{j=1}^k \beta_j \frac{1}{1 + \exp{ \left( -\gamma_{j0}- \sum_{h=1}^p \gamma_{jh} x_{ih} \right)}} = \beta_0 + \sum_{j=1}^k \beta_j \psi(\bm{x_{i}^T\gamma_j}) = \bm{\beta}^T \bm{\eta_i(\gamma)} = \bm{L_i} \bm{\beta}
\end{equation} 
where, $\bm{\beta} = (\beta_0,..,\beta_k)^T$, $\bm{x_i} = (1,x_{i1},..,x_{ip})^T$, $\bm{\eta_i(\gamma)} = (1,\psi(\bm{x_{i}^T\gamma_1}),..,\psi(\bm{x_{i}^T\gamma_k}))^T$ and $\bm{L} = (\bm{\eta_1(\gamma)},..,\bm{\eta_n(\gamma)})^T$, $i = 1,..,n$. $\psi(.)$ is the logistic activation function.

The specified model for $Y_i$ conditional on $\bm{X_i}=\bm{x_i}$ is given by $Y_i \sim ALD(\bm{L_i} \bm{\beta},\sigma,\tau)$ with a likelihood proportional to 
\begin{equation}\label{ALD_likelihood}
    \sigma^{-n} \exp \left \{-\sum_{i=1}^n \frac{|\varepsilon_i| + (2\tau-1)\varepsilon_i}{2\sigma} \right \}
\end{equation}
where, $\varepsilon_i = y_i - \bm{L_i\beta}$. The above ALD based likelihood can be represented as a location-scale mixture of normals  \citep{Kozumi-Kobayashi-2011}. For any $a,b > 0$, we have the following equality \citep{Andrews-Mallows-1974}
\begin{equation*}
    \exp\{-|ab|\} = \int_0^{\infty} \frac{a}{\sqrt{2\pi v}}\exp \left\{-\frac{1}{2} (a^2 v + b^2 v^{-1})\right\} \d v
\end{equation*}
Letting $a = 1/\sqrt{2\sigma}$, $b=\varepsilon/\sqrt{2\sigma}$, and multiplying by $\exp\{-(2\tau-1)\varepsilon/2\sigma\}$ the \autoref{ALD_likelihood} becomes

\begin{equation} \label{ALD_mixture_rep}
    \sigma^{-n} \exp \left \{-\sum_{i=1}^n \frac{|\varepsilon_i| + (2\tau-1)\varepsilon_i}{2\sigma} \right \} = \prod_{i=1}^n \int_0^{\infty} \frac{1}{\sigma \sqrt{4\pi\sigma v_i}} \exp \left\{ - \frac{(\varepsilon_i - \xi v_i)^2}{4\sigma v_i} - \zeta v_i \right\} \d v_i
\end{equation}
where, $\xi = (1-2\tau)$ and $\zeta = \tau(1-\tau)/\sigma$. \autoref{ALD_mixture_rep} is beneficial in the sense that there is no need to worry about the prior distribution of $v_i$ as it is extracted in the same equation. The exact prior of $v_i$ in \autoref{ALD_mixture_rep} is exponential distribution with mean $\zeta^{-1}$ and it depends on the value of $\tau$.

Further we observe that, the output of the afore-mentioned neural network remains unchanged under a set of transformations, like certain weight permutations and sign flips which renders the neural network non-identifiable. For example, in the above model \eqref{quant_fun}, take $p,k=2$ and $\beta_0, \gamma_{j0} =0$. Then,
\begin{equation*}
\sum_{j=1}^2 \beta_j \psi(\bm{x_{i}^T\gamma_j}) = \beta_1 \left[ 1 + \exp{ \left( -\gamma_{11}x_{i1} -\gamma_{12} x_{i2} \right)}\right]^{-1} + \beta_2 \left[ 1 + \exp{ \left( -\gamma_{21}x_{i1} -\gamma_{22} x_{i2} \right)}\right]^{-1}
\end{equation*} 
In the foregoing equation we can notice that when $\beta_1=\beta_2$ , two different sets of values of $(\gamma_{11},\gamma_{12},\gamma_{21},\gamma_{22})$ obtained by flipping the signs, namely (1,2,-1,-2) and (-1,-2,1,2) result in the same value for $\sum_{j=1}^2 \beta_j \psi(\bm{x_{i}^T\gamma_j})$. However, as a special case of lemma 1 of \citet{Ghosh-et-al-2000}, the joint posterior of the parameters is proper if the joint prior is proper, even in the case of posterior invariance under the parameter transformations. Note that, as long as the interest is on prediction rather parameter estimation, this property is sufficient for predictive model building. In this article we focus only on proper priors, hence the non-identifiability of the parameters in \eqref{quant_fun} doesn't cause any problem. 

\subsection{MCMC Algorithm}
We take mutually independent priors for $\bm{\beta},\bm{\gamma_1},\dots,\bm{\gamma_k}$ with $\bm{\beta}\sim \N(\bm{\beta_0},\sigma_0^2 \bm{I}_{k+1})$ and $\bm{\gamma_j} \sim \N(\bm{\gamma_{j0}},\sigma_1^2 \bm{I}_{p+1})$, $j=1,\dots,k$. Further, we take inverse gamma prior for $\sigma$ such that $\sigma \sim \mathbb{IG}(a/2,b/2)$. Prior selection is problem specific and it is useful to elicit the chosen prior from the historical knowledge. However, for most practical applications, such information is not readily available. Furthermore, neural networks are commonly applied to big data for which a priori knowledge regarding the data as well as about the neural network parameters is not typically known. Hence, prior elicitation from experts in the area is not applicable to neural networks in practice. As a consequence, it seems reasonable to use near-diffuse priors for the parameters of the given model. 

Now, the joint posterior for $\bm{\beta},\bm{\gamma},\sigma,\bm{v}$ given $\bm{y}$, is 
\vspace{-4mm}
\begin{align*}
f(\bm{\beta},\bm{\gamma},\sigma,\bm{v}|\bm{y}) \enskip \propto &  \hspace{1mm}  l(\bm{y}|\bm{\beta},\bm{\gamma},\sigma,\bm{v}) \hspace{1mm} \pi(\bm{\beta}) \hspace{1mm} \pi(\bm{\gamma}) \hspace{1mm} \pi(\sigma), \\
\propto &  \hspace{1mm} \left(\frac{1}{\sigma} \right)^{\frac{3n}{2}} \left( \prod_{i=1}^{n} v_i\right)^{-\frac{1}{2}} \exp\left\{ -\frac{1}{4\sigma} \left[(\bm{y}-\bm{L\beta}-\xi \bm{v})^T \bm{V} (\bm{y}-\bm{L\beta}-\xi \bm{v}) \right] - \frac{\tau(1-\tau)}{\sigma} \sum_{i=1}^n v_i \right\} \\
& \hspace{1mm} \times \hspace{1mm} \exp\left\{-\frac{1}{2\sigma_0^2} (\bm{\beta} - \bm{\beta_0})^T (\bm{\beta} - \bm{\beta_0}) \right\} \\
& \hspace{1mm} \times \hspace{1mm} \exp\left\{-\frac{1}{2\sigma_1^2} \sum_{j=1}^k (\bm{\gamma_j} - \bm{\gamma_{j0}})^T (\bm{\gamma_j} - \bm{\gamma_{j0}}) \right\} \\
& \hspace{1mm} \times \hspace{1mm} \left(\frac{1}{\sigma}\right)^{\frac{a}{2}+1} \exp\left(-\frac{b}{2\sigma} \right).
\end{align*} 
where, $\bm{V}=diag(v_1^{-1},v_2^{-1},\dots,v_n^{-1})$. A Gibbs sampling algorithm is used to generate samples from the analytically intractable posterior distribution $f(\bm{\beta},\bm{\gamma}|\bm{y})$. Some of the full conditionals required in this procedure are available only up to unknown normalizing constants, and we used random walk Metropolis-Hastings algorithm to sample from these full conditional distributions. 

These full conditional distributions are as follows:
\begin{enumerate}[label=(\alph*),leftmargin=*]
\item 
    $\begin{alignedat}[t]{2}
        \pi(\bm{\beta}|\bm{\gamma},\sigma,\bm{v},\bm{y}) \hspace{2mm} \sim & \hspace{1mm} \N \left[ \left(\frac{\bm{L}^T\bm{VL}}{2\sigma} + \frac{\bm{I}}{\sigma_0^2} \right)^{-1} \left(\frac{\bm{L}^T\bm{V}(\bm{y}-\xi\bm{v})}{2\sigma} + \frac{\bm{\beta_0}}{\sigma_0^2}\right), \left(\frac{\bm{L}^T\bm{VL}}{2\sigma} + \frac{\bm{I}}{\sigma_0^2} \right)^{-1} \right]
    \end{alignedat}$
\item 
    $\begin{alignedat}[t]{2}
        \pi(\bm{\gamma_j}|\bm{\beta},\sigma,\bm{v},\bm{y}) \hspace{1mm} \propto & \hspace{1mm}
        \exp\left\{ -\frac{1}{4\sigma} \left[(\bm{y}-\bm{L\beta}-\xi \bm{v})^T \bm{V} (\bm{y}-\bm{L\beta}-\xi \bm{v}) \right] \right\} \hspace{1mm} \times \hspace{1mm}  \exp\left\{-\frac{1}{2\sigma_1^2} (\bm{\gamma_j} - \bm{\gamma_{j0}})^T (\bm{\gamma_j} - \bm{\gamma_{j0}}) \right\} 
    \end{alignedat}$    
\item 
    $\begin{alignedat}[t]{2}
        \pi(\sigma|\bm{\gamma},\bm{\beta},\bm{v},\bm{y}) \hspace{2mm} \sim & \hspace{1mm} \mathbb{IG} \left(\frac{3n+a}{2}, \frac{1}{4} \left[(\bm{y}-\bm{L\beta}-\xi \bm{v})^T \bm{V} (\bm{y}-\bm{L\beta}-\xi \bm{v}) \right] + \tau(1-\tau) \sum_{i=1}^n v_i + \frac{b}{2}\right)
    \end{alignedat}$    
\item 
    $\begin{alignedat}[t]{2}
        \pi(v_i|\bm{\gamma},\bm{\beta},\sigma,\bm{y}) \hspace{1mm} \sim & \hspace{1mm} \mathbb{GIG} \left(\nu,\rho_1,\rho_2 \right) \text{ where, } \nu = \frac{1}{2}, \rho_1^2 = \frac{1}{2\sigma} \left(y_i - \bm{L_i\beta} \right)^2, \text{ and } \rho_2^2 = \frac{1}{2\sigma}.
    \end{alignedat}$ \vspace{2mm}
    \\
The generalized inverse Gaussian distribution is defined as, if $x \sim \mathbb{GIG}\left(\nu,\rho_1,\rho_2 \right)$ then the probability density function of $x$ is given by
    \begin{equation*}
    f(x|\nu,\rho_1,\rho_2) = \frac{(\rho_2/\rho_1)^\nu}{2K_\nu(\rho_1\rho_2)} x^{\nu-1} \exp \left\{-\frac{1}{2}(x^{-1}\rho_1^2 + x\rho_2^2) \right\},
    \end{equation*}
    where $x>0, -\infty < \nu < \infty, \rho_1,\rho_2 \geq 0$ and $K_\nu(.)$ is a modified Bessel function of the third kind (see, \citet{Barndorff-Nielsen-Shephard-2001}).
\end{enumerate}

Unlike the parsimonious parametric models, the Bayesian nonparametric models require additional statistical justification for their theoretical validity. For that reason we are going to provide asymptotic consistency of the posterior distribution derived in our proposed neural network model.

\subsection{Theoretical Validity: Posterior Consistency} 
Let $(\bm{x_1},y_1),\dots,(\bm{x_n},y_n)$ be the given data, let $f_0(\bm{x})$ be the underlying density of $\bm{X}$. Let $Q_{\tau}(y |\bm{X} = \bm{x}) = \mu_0(\bm{x})$ be the true conditional quantile function of $Y$ given $\bm{X}$, and let $\hat{\mu}_n(\bm{x})$ be the estimated conditional quantile function.
\begin{definition}
$\hat{\mu}_n(\bm{x})$ is \textit{asymptotically consistent} for $\mu_0(\bm{x})$ if 
    \begin{equation*} 
        \int \abs{\hat{\mu}_n(\bm{x})-\mu_0(\bm{x})} \hspace{1mm} f_0(\bm{x})\hspace{1mm} \d \bm{x} \overset{p}{\to} 0   
    \end{equation*}
\end{definition}
We are essentially making use of Markov's inequality to ultimately show that $\hat{\mu}_n(\bm{X}) \overset{p}{\to} \mu_0(\bm{X})$. In similar frequentist sense, \citet{Funahashi-1989} and \citet{Hornik-et-al-1989} have shown the asymptotic consistency of the neural networks for mean-regression models by showing the existence of some neural network, $\hat{\mu}_n(\bm{x})$, whose mean squared error with the true function, $\mu_0(\bm{x})$, converges to $0$ in probability.

We will consider the notion of posterior consistency for Bayesian non-parametric problems which is quantified by concentration around the true density function (see \citet{Wasserman-1998}, \citet{Barron-et-al-1999}). This boils down to the above definition of consistency on the conditional quantile functions. The main idea is that the density functions deal with the joint distribution of $\bm{X}$ and $Y$, while the conditional quantile function deals with the conditional distribution of $Y$ given $\bm{X}$. This conditional distribution can then be used to construct the joint distribution by assuming  certain regularity condition on the distribution of $\bm{X}$. This allows the use of some techniques developed in density estimation field. Some of the ideas presented here can be found in \citet{Lee-2000} which developed the consistency results for non-parametric regression using single hidden-layer feed forward neural networks. 

 Let the posterior distribution of the parameters be denoted by $P(.|(\bm{X_1},Y_1),\dots,(\bm{X_n},Y_n))$.  Let $f(\bm{x},y)$ and $f_0(\bm{x},y)$ denote the joint density function of $\bm{x}$ and $y$ under the model and the truth respectively. Indeed, one can construct the joint density $f(\bm{x},y)$ from the condition quantile function $f(y|\bm{x})$ by taking $f(\bm{x},y)=f(y|\bm{x})f(\bm{x})$ where $f(\bm{x})$ denotes the underlying density of $X$. Since, one is only interested in $f(y|\bm{x})$ and $\bm{X}$ is ancillary to the estimation of $f(y|\bm{x})$, one can  use some convenient distribution for $f(x)$. Similar to \citet{Lee-2000}, we define Hellinger neighborhoods of the true density function $f_0(\bm{x},y)=f_0(y|\bm{x})f_0(\bm{x})$ which allows us to quantify the consistency of the posterior. The Hellinger distance between $f_0$ and any joint density function $f$ of $x$ and $y$ is defined as follows.
\begin{equation}\label{Hellinger_distance}
    D_H(f,f_0) = \sqrt{\int \int \left( \sqrt{f(\bm{x},y)} - \sqrt{f_0(\bm{x},y)} \right)^2 \hspace{1mm} \d \bm{x} \hspace{1mm} dy}
\end{equation}
Based on \autoref{Hellinger_distance}, an $\epsilon$-sized Hellinger neighborhood of the true density function $f_0$ is given by
\begin{equation}\label{Hellinger neighborhood}
    A_\epsilon = \{f:\hspace{1mm} D_H(f,f_0)\leq \epsilon \}
\end{equation}

\begin{definition}[Posterior Consistency]
Suppose $(\bm{X_i},Y_i) \sim f_0$. The posterior is \textit{asymptotically consistent for $f_0$ over Hellinger neighborhoods} if $\forall \epsilon > 0$,
    \begin{equation*} 
        P(A_\epsilon|(\bm{X_1},Y_1),\dots,(\bm{X_n},Y_n)) \overset{p}{\to} 1
    \end{equation*}
\textit{i.e. the posterior probability of any Hellinger neighborhood of $f_0$ converges to 1 in probability.}
\end{definition}

Similar to \citet{Lee-2000}, we will prove the asymptotic consistency of the posterior for neural networks with number of hidden nodes, $k$, being function of sample size, $n$. This sequence of models indexed with increasing sample size is called \textit{sieve}. We take sequence of priors, $\{\pi_n\}$, where each $\pi_n$ is defined for a neural network with $k_n$ hidden nodes in it. The predictive density (Bayes estimate of $f$) will then be given by
\begin{equation}\label{predictive density}
    \hat{f}_n(.) = \int f(.)\hspace{1mm}\d P(f|(\bm{X_1},Y_1),\dots,(\bm{X_n},Y_n))
\end{equation}
Let $\mu_0(\bm{x}) = Q_{\tau,f_0}(Y |\bm{X} = \bm{x})$ be the true conditional quantile function and let $\hat{\mu}_n(\bm{x}) = Q_{\tau,\hat{f}_n}(Y |\bm{X} = \bm{x})$ be the posterior predictive conditional quantile function using a neural network. For notational convenience we are going to drop $\bm{x}$ and denote these functions as $\mu_0$ and $\hat{\mu}_n$ occasionally. 


The following is the key result in this case.
\begin{theorem}\label{thm-1}
Let the prior for the regression parameters, $\pi_n$, be an independent normal with mean $0$ and variance $\sigma_0^2$ (fixed) for each of the parameters in the neural network. Suppose that the true conditional quantile function is either continuous or square integrable. Let $k_n$ be the number of hidden nodes in the neural network, and let
$k_n\to \infty$. If there exists a constant $a$ such that $0 < a < 1$
and $k_n \leq n^a$, then $\int |\hat{\mu}_n(\bm{x})-\mu_0(\bm{x})| \hspace{1mm} \d \bm{x} \overset{p}{\to} 0$ as $n \to \infty$
\end{theorem}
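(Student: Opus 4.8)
The plan is to exploit exactly the embedding set up above: write the model joint density as $\text{ALD}(y\given\mu(\bm{x}),\sigma,\tau)\,f(\bm{x})$ with $\mu(\bm{x})$ the single-hidden-layer output of \autoref{quant_fun}, and then show that the posterior concentrates on the Hellinger neighborhoods $A_\epsilon$ built from \autoref{Hellinger_distance}. For this I would invoke the sieve version of the \citet{Barron-et-al-1999} machinery as used by \citet{Lee-2000}: expressing the posterior probability of $A_\epsilon^c$ as a ratio of integrated likelihood ratios, I would lower-bound the denominator (marginal likelihood) and upper-bound the numerator restricted to $A_\epsilon^c$. The two analytic inputs are therefore (i) a Kullback--Leibler prior-support condition guaranteeing the denominator is at least $e^{-n\nu}$ eventually with high probability, and (ii) a sieve $\mathcal{F}_n$ whose prior complement mass is exponentially small and whose bracketing entropy is $o(n)$, which yields exponentially powerful Hellinger tests separating $f_0$ from $A_\epsilon^c\cap\mathcal{F}_n$.

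For the prior-support condition I would first use the universal-approximation results of \citet{Funahashi-1989} and \citet{Hornik-et-al-1989} to produce, for each $n$, a network with $k_n$ nodes whose output is close to $\mu_0$ (uniformly or in $L_2(f_0)$); continuity or square-integrability of $\mu_0$ is exactly the hypothesis these theorems need. I would then transfer this approximation into the ALD model by bounding the Kullback--Leibler divergence between two ALDs in terms of the gap between their location parameters --- this is where \citet{Sriram-et-al-2013}'s handling of the (misspecified) ALD enters --- so that a neighborhood of the good weight vector maps into a KL neighborhood of $f_0$. Finally, since the coordinatewise $\N(0,\sigma_0^2)$ prior has a strictly positive continuous density, it assigns positive mass to that weight neighborhood, and I would track the mass carefully enough (it shrinks only subexponentially because the number of weights is $k_n(p+2)\le n^a(p+2)$) to meet the denominator requirement.

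The sieve step is where the hypothesis $k_n\le n^a$ with $a<1$ does its work. I would take $\mathcal{F}_n=\{\,(\bm{\beta},\bm{\gamma},\sigma):\ \norm{\bm{\beta}},\norm{\bm{\gamma}}\le C_n,\ \sigma\in[\underline{\sigma},\overline{\sigma}]\,\}$ for a slowly growing $C_n$; the Gaussian and inverse-gamma tails then make $\pi_n(\mathcal{F}_n^c)$ exponentially small. To bound the bracketing entropy I would use that the logistic activation $\psi$ is bounded and Lipschitz, so the network output is Lipschitz in the parameters on $\mathcal{F}_n$, and that the ALD density is Lipschitz in its location in the Hellinger metric; composing these reduces $\log N_{[]}(\delta,\mathcal{F}_n,D_H)$ to covering a $k_n(p+2)$-dimensional ball of radius $C_n$, which is of order $k_n(p+2)\log(C_n/\delta)=o(n)$ precisely because $k_n\le n^a$. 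Combining (i) and (ii) through the ratio bound gives $P(A_\epsilon^c\given\text{data})\overset{p}{\to}0$, i.e. Hellinger consistency of the posterior.

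The last step converts Hellinger consistency of the joint density into the $L_1$ statement on the conditional quantile. Since the two densities share the ALD form in the $y$-direction, $D_H(f,f_0)$ controls the weighted location gap, so integrating the pointwise ALD Hellinger lower bound over $\bm{x}$ forces $\int\abs{\mu(\bm{x})-\mu_0(\bm{x})}\,f_0(\bm{x})\,\d\bm{x}$ to be small; passing to the predictive density $\hat f_n$ of \autoref{predictive density}, and hence to its quantile $\hat\mu_n$, I would use convexity/Jensen to move the posterior concentration inside the integral. The main obstacle I anticipate is twofold: establishing the Hellinger-Lipschitz and KL bounds for the ALD despite the kink of the check loss at the origin (so the usual smooth-model arguments do not apply verbatim), and controlling the growing-dimension bracketing entropy so that it stays $o(n)$ uniformly in the slowly growing radius $C_n$. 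These two estimates, rather than the abstract ratio argument, are the technical heart of the proof.
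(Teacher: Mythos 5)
Your proposal follows essentially the same route as the paper: the \citet{Lee-2000}-style ratio decomposition, with (i) exponentially small prior mass outside a sieve plus a bracketing-entropy/test bound on the numerator (via \citet{Wong-Shen-1995} and the parameter-Lipschitz property of the network composed with the location-Lipschitz property of the ALD), and (ii) a KL prior-support bound on the denominator obtained from \citet{Funahashi-1989}/\citet{Hornik-et-al-1989} approximation plus a direct Gaussian prior-mass computation over the $O(n^a)$ weights; your final conversion from Hellinger posterior consistency to $L_1$ convergence of the predictive conditional quantile (Jensen to pass to $\hat f_n$, then the pointwise ALD Hellinger formula) is exactly the paper's Corollary \ref{corollary}. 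Your identification of the two technical hearts --- the Lipschitz/KL estimates for the ALD across the check-function kink, and the growing-dimension entropy --- matches precisely what the paper's Lemma \ref{lemma-2} and the proof of \autoref{thm-1} carry out by explicit case analysis around the kink.

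The one genuine flaw is your treatment of the scale parameter. You place $\sigma$ in the sieve as $\sigma \in [\underline{\sigma},\overline{\sigma}]$ and assert that ``the Gaussian and inverse-gamma tails then make $\pi_n(\mathcal{F}_n^c)$ exponentially small.'' This step fails as stated: for a fixed interval the complement $\{\sigma \notin [\underline{\sigma},\overline{\sigma}]\}$ carries constant prior mass, and even if you let $\overline{\sigma}=\overline{\sigma}_n$ grow, the inverse-gamma upper tail decays only polynomially, $P(\sigma > t) \asymp t^{-a/2}$, so exponential smallness would force $\log \overline{\sigma}_n \asymp n$, after which you would need to recheck both the bracketing entropy and the Hellinger-to-quantile conversion with scales ranging over an exponentially long interval (the location-Lipschitz constant of the ALD scales like $1/\sigma$, so small $\sigma$ inflates the entropy as well). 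The paper sidesteps all of this by fixing $\sigma=1$ throughout the consistency analysis --- consistent with the statement of \autoref{thm-1}, whose prior $\pi_n$ is placed only on the regression weights --- so the sieve \ref{sieve} constrains weights alone. If you restrict your argument in the same way, the remainder of your outline goes through and coincides with the paper's proof.
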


In order to prove \autoref{thm-1}, we assume that  $\bm{X_i}\sim U(0,1)$, i.e. density function of $\bm{x}$ is identically equal to 1. This implies joint densities $f(\bm{x},y)$ and $f_0(\bm{x},y)$ are equal to the conditional density functions, $f(y|\bm{x})$ and $f_0(y|\bm{x}
)$ respectively.  Next, we define Kullback-Leibler distance to the true density $f_0(\bm{x},y)$ as follows
\begin{equation}\label{KL Distance}
    D_K(f_0,f) = \mathbb{E}_{f_0} \left[\log \frac{f_0(\bm{X},Y)}{f(\bm{X},Y)} \right]
\end{equation}
Based on \autoref{KL Distance}, a $\delta-$ sized neighborhood  of the true density $f_0$ is given by
\begin{equation} \label{KL neighborhood}
    K_\delta = \{f: D_K(f_0,f) \leq \delta \} 
\end{equation}
 Further towards the proof of  \autoref{thm-1}, we define the sieve $\F_n$ as the set of all neural networks with each parameter less than $C_n$ in absolute value,
\begin{equation} \label{sieve}
    |\gamma_{jh}| \leq C_n,\enskip |\beta_j| \leq C_n, \quad j=0,\dots,k_n, \enskip h=0,\dots,p
\end{equation}
where $C_n$ grows with $n$ such that $C_n \leq \exp (n^{b-a})$ for any constant $b$ where $0<a<b<1$, and $a$ is same as in \autoref{thm-1}.  

For the above choice of sieve, we next provide a set of conditions on the prior $\pi_n$ which guarantee the posterior consistency of $f_0$ over the Hellinger neighborhoods. At the end of this section, we demonstrate that the following theorem and corollary serve as an important tool towards the proof of \autoref{thm-1}.

\begin{theorem}\label{thm-2}
Suppose a prior $\pi_n$ satisfies
    \begin{enumerate}[label=\roman*]
        \item $\exists \hspace{1mm} r > 0 $ and $N_1$ s.t. $\pi_n(\F_n^c)<\exp (-nr), \enskip \forall n\geq N_1$
        \item $\forall \delta,\nu \hspace{1mm} > 0, \exists \hspace{1mm} N_2$ s.t. $\pi_n(K_\delta) \geq \exp (-n\nu),\hspace{1mm} \forall n \geq N_2$.  
    \end{enumerate}
Then $\forall \epsilon>0$,
    \begin{equation*} 
        P(A_\epsilon |(\bm{X_1},Y_1),\dots,(\bm{X_n},Y_n)) \overset{p}{\to} 1
    \end{equation*}
    where $A_\epsilon$ is the Hellinger neighborhood of $f_0$ as in \autoref{Hellinger neighborhood}.
\end{theorem}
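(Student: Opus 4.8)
The plan is to follow the Barron--Schervish--Wasserman scheme used by \citet{Lee-2000} and bound the posterior probability of the complementary event $A_\epsilon^c$ directly. First I would write this posterior probability as the likelihood--ratio quotient
\begin{equation*}
P(A_\epsilon^c \mid (\bm{X_1},Y_1),\dots,(\bm{X_n},Y_n)) = \frac{\displaystyle\int_{A_\epsilon^c} \prod_{i=1}^n \frac{f(\bm{X_i},Y_i)}{f_0(\bm{X_i},Y_i)}\, \d\pi_n(f)}{\displaystyle\int \prod_{i=1}^n \frac{f(\bm{X_i},Y_i)}{f_0(\bm{X_i},Y_i)}\, \d\pi_n(f)},
\end{equation*}
so that it suffices to bound the numerator from above and the denominator from below and then show the ratio tends to $0$ in probability, which is equivalent to $P(A_\epsilon\mid\text{data})\to 1$.

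For the denominator I would invoke condition (ii). Restricting the integral to the Kullback--Leibler neighborhood $K_\delta$ of \autoref{KL neighborhood}, writing $\prod_i (f/f_0) = \exp\{-\sum_i \log(f_0/f)\}$, and applying Jensen's inequality to pull the renormalized prior average inside the exponential, one reduces matters to the behavior of $n^{-1}\sum_i \log(f_0/f)(\bm{X_i},Y_i)$, whose $P_{f_0}$-limit is controlled by $D_K(f_0,f)\le\delta$ via the strong law of large numbers. This yields Barron's lower bound: for every $\beta>0$ the denominator exceeds $\exp(-n\beta)$ with probability tending to $1$, and the exponentially small floor $\pi_n(K_\delta)\ge\exp(-n\nu)$ supplied by condition (ii) is exactly what prevents this bound from collapsing.

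For the numerator I would split $A_\epsilon^c$ into its intersection with the sieve $\F_n$ of \autoref{sieve} and with $\F_n^c$. The off-sieve piece is dispatched by condition (i): taking $P_{f_0}$-expectations and using Fubini together with $\mathbb{E}_{f_0}\prod_i (f/f_0)\le 1$ for each fixed density $f$, its expectation is at most $\pi_n(\F_n^c)\le\exp(-nr)$, so Markov's inequality makes it $o(\exp(-n\beta))$ for $\beta<r$. The on-sieve piece $A_\epsilon^c\cap\F_n$ is the delicate one: here I would use the existence of uniformly exponentially consistent tests $\phi_n$ of $f_0$ against $\{f:D_H(f,f_0)>\epsilon\}\cap\F_n$, with $\mathbb{E}_{f_0}\phi_n$ and $\sup_f \mathbb{E}_f(1-\phi_n)$ both decaying exponentially. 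Splitting the integral through $\phi_n$ and $1-\phi_n$, the type~I error controls one term directly while Fubini and the type~II error control the other, so the on-sieve contribution is also exponentially small. Such tests exist provided the bracketing Hellinger entropy of $\F_n$ grows more slowly than $n$, by the Le~Cam--Birg\'e testing construction invoked in \citet{Barron-et-al-1999}.

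Combining the three estimates, the numerator is $O(\exp(-nc))$ for some $c>0$ while the denominator exceeds $\exp(-n\beta)$ with $\beta$ chosen below $c$, forcing the ratio to $0$ in probability. The hard part will be the on-sieve term, because it is \emph{not} implied by conditions (i)--(ii) on their own: it rests on a separate, purely deterministic estimate of the bracketing entropy of the parameter-bounded network class $\F_n$ with $|\gamma_{jh}|,|\beta_j|\le C_n$ and $k_n\le n^a$ nodes, together with the verification that this entropy is $o(n)$. Establishing that entropy bound, and reconciling the prescribed growth rates $C_n\le\exp(n^{b-a})$ and $k_n\le n^a$ so that the exponentially consistent tests genuinely exist, is the crux of the argument.
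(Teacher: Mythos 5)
Your proposal is correct, and its skeleton coincides with the paper's: the same likelihood-ratio quotient, the same split of the numerator into $A_\epsilon^c \cap \F_n$ and $A_\epsilon^c \cap \F_n^c$, condition (i) with Fubini and Markov for the off-sieve piece, and condition (ii) with Barron's Jensen/law-of-large-numbers argument for the denominator lower bound (this is exactly the paper's \autoref{lemma-4}, taken from Lee's Lemma 5). The one place where you genuinely diverge is the on-sieve term. You propose constructing Le Cam--Birg\'e uniformly exponentially consistent tests $\phi_n$ and controlling the two resulting pieces through the type I and type II errors plus Fubini; the paper instead (following \citet{Lee-2000}, via \autoref{lemma-2} and \autoref{lemma-3}) invokes Theorem 1 of \citet{Wong-Shen-1995}, which converts the bracketing-entropy bound directly into a uniform exponential bound $\sup_{f\in A_\epsilon^c\cap\F_n} R_n(f) \leq 4\exp(-c_2 n\epsilon^2)$ holding except on a set of vanishing probability, and since $\pi_n$ has total mass one this bounds $\int_{A_\epsilon^c\cap\F_n}R_n(f)\,\d\pi_n(f)$ with no further work. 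The two devices are close cousins, and both are fed by exactly the same deterministic entropy computation --- the covering-number bound \autoref{Covering number}, the bracketing bound \autoref{cover} via van der Vaart--Wellner, and the entropy-integral verification in \autoref{lemma-1} --- so you are right that this estimate, and not conditions (i)--(ii) themselves, is the crux; it is supplied by the specific sieve \autoref{sieve} with $k_n\leq n^a$ and $C_n\leq\exp(n^{b-a})$. What your route buys is modularity and generality: the testing argument is the standard Barron--Schervish--Wasserman machinery and adapts to other metrics and entropy notions. What the paper's route buys is economy: the Wong--Shen inequality eliminates the explicit test construction and the extra Fubini/Markov step, delivering the on-sieve bound in a single stroke tailored to Hellinger bracketing entropy.
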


\begin{corollary}\label{corollary}
Under the conditions of \autoref{thm-2}, $\hat{\mu}_n$ is asymptotically consistent for $\mu_0$, i.e.
    \begin{equation*} 
        \int \abs{\hat{\mu}_n(x)-\mu_0(x)} \hspace{1mm} \d \bm{x} \overset{p}{\to} 0
    \end{equation*}
\end{corollary}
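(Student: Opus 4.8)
The plan is to pass from the Hellinger consistency of the \emph{posterior} (\autoref{thm-2}) to Hellinger consistency of the single \emph{predictive} density $\hat{f}_n$, and then to convert a statement about $\hat{f}_n$ versus $f_0$ into one about their conditional $\tau$-quantiles $\hat{\mu}_n$ and $\mu_0$.

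First I would show $D_H(\hat{f}_n,f_0)\overset{p}{\to}0$. Since $\hat{f}_n=\int f\,dP(f\mid\cdot)$ is a posterior mixture and the functional $f\mapsto D_H^2(f,f_0)=2-2\iint\sqrt{f f_0}$ is convex (because $\sqrt{\cdot}$ is concave), Jensen's inequality gives $D_H^2(\hat{f}_n,f_0)\le \int D_H^2(f,f_0)\,dP$. Splitting the integral over $A_\epsilon$ and $A_\epsilon^c$ and using $D_H^2\le 2$, the right side is at most $\epsilon^2+2\,P(A_\epsilon^c\mid\text{data})$; \autoref{thm-2} makes the last term $o_p(1)$, and $\epsilon>0$ is arbitrary. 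Next I would convert to total variation: Cauchy--Schwarz gives $\tfrac12\iint|\hat{f}_n-f_0|\le D_H(\hat{f}_n,f_0)$. Because $\bm{X}\sim U(0,1)^p$, the joint total variation equals the $\bm{x}$-integrated conditional total variation, so $\int t_n(\bm{x})\,d\bm{x}\overset{p}{\to}0$, where $t_n(\bm{x})=\tfrac12\int|\hat{f}_n(y\mid\bm{x})-f_0(y\mid\bm{x})|\,dy$. Since the Kolmogorov distance is dominated by total variation, $\sup_y|\hat{F}_n(y\mid\bm{x})-F_0(y\mid\bm{x})|\le t_n(\bm{x})$; evaluating at $y=\hat{\mu}_n(\bm{x})$, where $\hat{F}_n=\tau$, yields the pointwise bound $|F_0(\hat{\mu}_n(\bm{x})\mid\bm{x})-\tau|\le t_n(\bm{x})$.

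Then I would invert using the structure of $f_0(\cdot\mid\bm{x})$, which is ALD with $\tau$-quantile $\mu_0(\bm{x})$ and strictly increasing CDF, so $\hat{\mu}_n(\bm{x})$ lies between $F_0^{-1}(\tau\pm t_n(\bm{x})\mid\bm{x})$. For the ALD this gives an explicit bound $|\hat{\mu}_n(\bm{x})-\mu_0(\bm{x})|\le G(t_n(\bm{x}))$, where $G$ is increasing, convex, continuous on $[0,\tau^*)$ with $G(0)=0$, and $\tau^*=\min(\tau,1-\tau)$. Because $\int t_n\,d\bm{x}\overset{p}{\to}0$ forces $t_n\to 0$ in measure, the region where $t_n$ is small contributes negligibly to $\int |\hat{\mu}_n-\mu_0|\,d\bm{x}$ by continuity of $G$ at $0$.

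The hard part will be the ``bad set'' $B_n=\{\bm{x}:t_n(\bm{x})\ge \tau^*/2\}$, where $G$ blows up: although $\mathrm{Leb}(B_n)\to 0$ by Markov's inequality, the inversion gives no useful bound there, so $\int_{B_n}|\hat{\mu}_n-\mu_0|\,d\bm{x}$ must be controlled directly. The $\mu_0$ contribution vanishes by absolute continuity of the integral (using $\mu_0$ bounded when continuous, or $\mu_0\in L^2\subset L^1$ on the compact domain). The $\hat{\mu}_n$ contribution is the genuine obstacle, since a crude sieve bound $|\hat{\mu}_n|\lesssim C_n k_n$ need not beat the unquantified rate of $\mathrm{Leb}(B_n)\to 0$. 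I would instead establish uniform integrability of $\hat{\mu}_n$ over $\bm{x}$ by exploiting the exponential tails of $f_0$ together with condition~(i) of \autoref{thm-2} (exponentially small posterior mass on $\F_n^c$), so that far-out posterior draws cannot push the predictive quantile far on a non-negligible set. A truncation argument then splices the good-set estimate (continuity of $G$) with the bad-set estimate (uniform integrability) to conclude $\int|\hat{\mu}_n(\bm{x})-\mu_0(\bm{x})|\,d\bm{x}\overset{p}{\to}0$.
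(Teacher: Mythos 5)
Your route is genuinely different from the paper's after the common first step, and the part you have actually carried out is sound. The paper, after the same Jensen argument (which it states for $D_H(f_0,\cdot)$ directly; your version via convexity of $D_H^2$ is the more careful form, since $D_H(f_0,\cdot)$ itself is not convex), never passes through total variation or the Kolmogorov distance. Instead it computes $D_H(f_0,\hat{f}_n)$ in closed form by treating \emph{both} $f_0$ and $\hat{f}_n$ as ALD densities with locations $\mu_0$ and $\hat{\mu}_n$, integrates out $y$ exactly, and deduces that $\int G(\abs{\hat{\mu}_n-\mu_0})\,\d\bm{x} \overset{p}{\to} 1$ for the explicit bounded, continuous, strictly decreasing function $G(u)=\bigl[(1-\tau)e^{-u\tau/2}-\tau e^{-u(1-\tau)/2}\bigr]/(1-2\tau)$ with $G(0)=1$; it then finishes with a contradiction argument and an appeal to Scheff\'e's theorem. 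Your TV/Kolmogorov/quantile-inversion chain uses the ALD structure of $f_0$ only, not of $\hat{f}_n$, which is arguably more defensible than the paper's identification of the predictive density (a posterior \emph{mixture} of ALDs) with a single ALD located at $\hat{\mu}_n$. Up to the conclusion that $\hat{\mu}_n-\mu_0$ converges to $0$ in Lebesgue measure (in probability over the data), your argument works.

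The gap is the final step, and it is genuine rather than a routine verification: you never prove the uniform integrability of $\hat{\mu}_n$, you only announce it, and the ingredients you cite do not deliver it. Condition (i) of \autoref{thm-2} together with Lemma \ref{lemma-4} yields posterior mass $1-e^{-cn}$ on $\F_n$, hence only the crude bound $\abs{\hat{\mu}_n(\bm{x})} \lesssim C_nk_n \leq e^{n^{b-a}}n^a$ that you yourself dismiss; and \autoref{thm-2} is purely qualitative, so $\mathrm{Leb}(B_n)\to 0$ comes with no rate that could beat this bound. The exponential tails of $f_0$ constrain $F_0$, not the predictive quantile: a density in $A_\epsilon$ may have its conditional $\tau$-quantile arbitrarily far from $\mu_0(\bm{x})$ on an $\bm{x}$-set of measure of order $\epsilon^2$, and neither Hellinger nor TV closeness of the joint densities bounds \emph{how far}, so posterior concentration on $A_\epsilon$ cannot by itself control $\int_{B_n}\abs{\hat{\mu}_n}\,\d\bm{x}$. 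In other words, the uniform integrability you need is exactly the missing lemma, and nothing in the conclusion of \autoref{thm-2} supplies it. You are in good company: the paper's own final passage --- from convergence of the bounded transform $\int G(\abs{\hat{\mu}_n-\mu_0})\,\d\bm{x}\to 1$ to $\int\abs{\hat{\mu}_n-\mu_0}\,\d\bm{x}\to 0$ via ``Scheff\'e's theorem'' --- confronts the same unboundedness of $\hat{\mu}_n$ and is equally unquantified at that point. Your write-up has the virtue of making the difficulty explicit, but as it stands it stops where the real work begins, so the proposed proof is incomplete.
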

We present the proofs of Theorem \ref{thm-2} and Corollary \ref{corollary} in \ref{thm2_proof} and \ref{corollary_proof}. In the next few paragraphs, we provide an outline of the steps used.

The main idea behind the proof of \autoref{thm-2} is to consider the complement of  $P(A_\epsilon |(\bm{X_1},Y_1),..,(\bm{X_n},Y_n))$ as a ratio of integrals. Hence let
\begin{equation}
\label{rnf}
    R_n(f) = \frac{{\displaystyle \prod\limits_{i=1}^n f(\bm{x_i},y_i)}}{{\displaystyle \prod\limits_{i=1}^n f_0(\bm{x_i},y_i)}}
\end{equation}
Then
\begin{align}\label{compliment thm-2}
    P(A_\epsilon^c |(\bm{X_1},Y_1),\dots,(\bm{X_n},Y_n)) & = \frac{{\displaystyle\int_{A_\epsilon^c} \prod\limits_{i=1}^n f(\bm{x_i},y_i) \d \pi_n(f)}}{{\displaystyle \int \prod\limits_{i=1}^n f(\bm{x_i},y_i) \d \pi_n(f)}}= \frac{{\displaystyle\int_{A_\epsilon^c} R_n(f) \d \pi_n(f)}}{{\displaystyle\int  R_n(f) \d \pi_n(f)}}\nonumber \\
    &=\frac{{\displaystyle\int_{A_\epsilon^c \cap \F_n} R_n(f) \d \pi_n(f)}+{\displaystyle\int_{A_\epsilon^c \cap \F_n^c} R_n(f) \d \pi_n(f)}}{{\displaystyle\int  R_n(f) \d \pi_n(f)}} \nonumber
\end{align}
In the proof, we will show that the numerator is small as compared to the denominator, thereby ensuring $P(A_\epsilon^c |(\bm{X_1},Y_1),\dots,(\bm{X_n},Y_n))\overset{p}{\to} 0$. The convergence of the second term in the numerator uses assumption i) of the Theorem \ref{thm-2}. It systematically shows that $\int_{\F_n^c} R_n(f) \d \pi_n(f) < \exp(-nr/2)$ except on a set with probability tending to zero (see Lemma \ref{lemma-3} in \ref{AppendixA} for further details). The denominator is bounded using assumption ii) of Theorem \ref{thm-2}. First the KL distance between $f_0$ and $\int f d\pi_n(f)$ is bounded and subsequently used to prove that $P(R_n(f)\leq e^{-n\varsigma}) \overset{p}{\to} 0$, where $\varsigma$ depends on $\delta$ defined earlier. This leads to a conclusion that for all $\varsigma > 0$ and for sufficiently large $n$, $\int R_n(f) d\pi_n(f) > e^{-n\varsigma}$ except on a set of probability going to zero. The result in this case has been condensed in Lemma \ref{lemma-4} presented in \ref{AppendixA}.

Lastly, the first term in the numerator is bounded using the Hellinger bracketing entropy defined below
\begin{definition}[Bracketing Entropy]
For any two functions $l$ and $u$, define the bracket $[l,u]$ as the set of all functions $f$ such that $l\leq f\leq u$. Let $\norm{.}$ be a metric. Define an $\epsilon$-bracket as a bracket with $\norm{u-l}<\epsilon$. Define the bracketing number of a set of functions $\F^*$ as the minimum number of $\epsilon$-brackets needed to cover the $\F^*$, and denote it by $N_{[]}(\epsilon,\F^*,\norm{.})$. Finally, the bracketing entropy, denoted by $H_{[]}()$ , is the natural logarithm of the bracketing number. \citep{Pollard-1991} 
\end{definition}

\citet[Theorem~1, pp.348-349]{Wong-Shen-1995} gives the conditions on the rate of growth of the Hellinger bracketing entropy in order to ensure ${\displaystyle\int_{A_\epsilon^c \cap \F_n} R_n(f) \d \pi_n(f)} \overset{p}{\to} 0$. We next outline the steps to bound the bracketing entropy induced by the sieve structure in \autoref{sieve}.

In this direction, we first compute the covering number and use it as an upper bound in order to find the bracketing entropy for a neural network. Let's consider k, number of hidden nodes to be fixed for now and restrict the parameter space to $\F_n$ then $\F_n \subset \R^d$ where $d=(p+2)k+1$. Further let the covering number be $N(\epsilon,\F_n,\norm{.})$ and use $L_\infty$ as a metric to cover the $\F_n$ with balls of radius $\epsilon$. Then, one does not require more than $((C_n+1)/\epsilon)^d$ such balls which implies
\begin{equation}\label{Covering number}
    N(\epsilon,\F_n,L_\infty) \leq \left(\frac{2C_n}{2\epsilon}+1\right)^d = \left( \vphantom{\frac{C_n+1}{\epsilon} } \frac{C_n+\epsilon}{\epsilon} \right)^d \leq \left(\frac{C_n+1}{\epsilon} \right)^d
\end{equation}
Together with \autoref{Covering number}, we use results from \citet[Theorem~2.7.11 on p.164]{van_der_Vaart-Wellner-1996} to bound the bracketing number of $\F^*$ (the space of all functions on $x$ and $y$ with parameter vectors lying in $\F_n$) as follows:
\begin{equation}
\label{cover}
    N_{[]}(\epsilon,\F^*,\norm{.}_2) \leq \left( \frac{dC_n^2}{\epsilon} \right)^d
\end{equation}
This allows us to determine the rate of growth of Hellinger bracketing entropy which is nothing but the log of the quantity in \autoref{cover}. For further details, we refer to Lemmas \autoref{lemma-2} and \autoref{lemma-3} in \ref{AppendixA}.

Going back to the proof of  \autoref{thm-1}, we show that the $\pi_n$ in  \autoref{thm-1} satisfies the conditions of  \autoref{thm-2} for $\F_n$ as in \autoref{sieve}. Then, the result of \autoref{thm-1} follows from the Corollary \ref{corollary} which is derived from \autoref{thm-2}. Further details of the proof of \autoref{thm-1} are presented in \ref{thm1_proof}. Although \autoref{thm-1} uses a fixed prior, the results can be extended to a more general class of prior distributions as long as the assumptions of \autoref{thm-2} hold.

\section{Empirical Studies} 

\label{Section4} 



\subsection{Simulation Studies}
We investigate the performance of the proposed BQRNN method using two simulated examples and compare the estimated conditional quantiles of the response variable against frequentist quantile regression (QR), Bayesian quantile regression (BQR), and quantile regression neural network (QRNN) models. We implement QR from \href{https://cran.r-project.org/web/packages/quantreg/quantreg.pdf}{\texttt{quantreg}} package, BQR from \href{https://cran.r-project.org/web/packages/bayesQR/bayesQR.pdf}{\texttt{bayesQR}} package and QRNN from \href{https://cran.r-project.org/web/packages/qrnn/qrnn.pdf}{\texttt{qrnn}} \citep{Cannon-2011} package available in R. We choose two simulation scenarios, (i) a linear additive model, (ii) a nonlinear polynomial model. In both the cases we consider a heteroscedastic behavior of y given $\bm{x}$.

\textbf{Scenario 1}: Linear heteroscedastic; Data are generated from
\begin{equation*}
    Y = \bm{X}^T\beta_1 + \bm{X}^T\beta_2\varepsilon,
\end{equation*}

\textbf{Scenario 2}: Non-linear heteroscedastic; Data are generated from
\begin{equation*}
    Y = (\bm{X}^T\beta_1)^4 + (\bm{X}^T\beta_2)^2\varepsilon,
\end{equation*}
where, $\bm{X}=(X_1,X_2,X_3)$ and $X_i$'s are independent and follow $U(0,5)$. The parameters $\beta_1$ and $\beta_2$ are set at $(2,4,6)$ and $(0.1,0.3,0.5)$ respectively.

The robustness of our method is illustrated using three different types of random error component $(\varepsilon)$: $\N(0,1)$, $U(0,1)$, and $\mathcal{E}(1)$ where, $\mathcal{E}(\zeta)$ is the exponential distribution with mean $\zeta^{-1}$. Fore each scenario, we generate 200 independent observations.

We work with a single layer feedforward neural network with a fixed number of nodes $k$. We have tried several values of $k$ in the range of 2-8 and settled on $k=4$ which yielded better results than other choices while bearing reasonable computational cost. We generated 100000 MCMC samples and then discarded first half of the sampled chain as burn-in period.  The $50\%$ burn-in samples in MCMC simulations is not quite unusual and has been suggested by \citet{Gelman-Rubin-1992}. We also choose every $10^{\text{th}}$ sampled value for the estimated parameters to diminish the effect of autocorrelation between consecutive draws. Convergence of MCMC was checked using standard MCMC diagnostic tools \citep{Gelman-et-al-2013}. 

We have tried several different values of the hyperparameters. For brevity, we report the results for only choice of hyperparameters given by $\bm{\beta_0} = \bm{0}$, $\sigma_0^2 = 100$, $\bm{\gamma_j} = \bm{0}$, $\sigma_1^2 = 100$, $a=3$, and $b=0.1$. This particular choice of hyperparameters reflect our preference for near-diffuse priors since in many of the real applications of neural network we don't have information about the input and output variables relationship. Therefore, we wanted to test our model performance in the absence of specific prior elicitation.
We also tried different starting values for $\bm{\beta}$ and $\bm{\gamma}$ chains and found that model output is robust to different starting values of $\bm{\beta}$ but it varies noticeably for different starting values of $\bm{\gamma}$. Further, we observed that our model yields optimal results when we use QRNN estimates of $\bm{\gamma}$ as its starting value in our model. We also have to fine-tune the step size of random walk Metropolis-Hastings (MH) updates in the $\bm{\gamma}$ generation process and settled on random walk variance of $0.01^2$ for scenario 1 while $0.001^2$ for scenario 2. These step sizes lead to reasonable rejection rates for MH sampling of $\bm{\gamma}$ values. However, they indicate the slow traversal of the parameter space for $\bm{\gamma}$ values.

To compare the model performance of QR, BQR, QRNN and BQRNN, we have calculated the theoretical conditional quantiles and contrasted them with the estimated conditional quantiles from the given simulated models. For scenarios 1 and 2, \autoref{Sim_study_table_1} and \autoref{Sim_study_table_2}, respectively, present these results at quantile levels, $\tau = (0.05,0.50,0.95)$ for 3 observations. The \autoref{Sim_study_table_1} indicates neural network models performs comparably with the linear models. This ensures the use of neural network models even if the underlying relationship is linear. In \autoref{Sim_study_table_2}, we can observe that BQRNN outperforms other models in the tail area, i.e. $\tau=0.05,0.95$, whereas it's performance is comparable to QRNN at the median. The natural advantage of our Bayesian procedure over the frequentist QRNN is that we have posterior variance for our conditional quantile estimates which can be used as a uncertainty quantification. 

\begin{sidewaystable}[ph!] 
    \begin{center}
    \caption{Simulated Conditional Quantiles for QR, BQR, QRNN and BQRNN for Simulation Study 1}
    \label{Sim_study_table_1}
    \begin{tabular}{cccccccccc}
        \toprule
        Noise & Quantile & Obs & Theo Cond Q & QR Cond Q & BQR Cond Q & SD(BQR) & QRNN Cond Q & BQRNN Cond Q & SD(BQRNN) \\
        \midrule
        $\varepsilon \sim \N(0,1)$ & 
            $\tau$ = 0.05 & 20th & 17.56 & 17.19 & 17.20 & 0.61 & 16.98 & \textbf{16.40} & \textbf{0.35} \\
            &  & 50th & 37.38 & 37.69 & 37.68 & 0.72 & 38.80 & \textbf{40.99} & \textbf{0.67} \\
            &  & 100th & 42.53 & 42.45 & 42.58 & 0.89 & 41.43 & \textbf{42.07} & \textbf{0.64} \\
            & $\tau$ = 0.50 & 20th & 20.23 & 20.23 & 20.24 & 0.37 & 20.32 & \textbf{17.78} & \textbf{0.51} \\
            &  & 50th & 42.62 & 42.87 & 42.65 & 0.31 & 42.56 & \textbf{39.86} & \textbf{0.77} \\
            &  & 100th & 48.78 & 48.81 & 48.65 & 0.40 & 46.27 & \textbf{45.54} & \textbf{0.97} \\
            & $\tau$ = 0.95 & 20th & 22.90 & 21.81 & 22.38 & 0.67 & 21.42 & \textbf{21.66} & \textbf{0.44} \\
            &  & 50th & 47.86 & 47.92 & 47.69 & 0.70 & 47.52 & \textbf{45.82} & \textbf{0.74} \\
            &  & 100th & 55.02 & 54.28 & 54.20 & 0.92 & 53.80 & \textbf{52.18} & \textbf{1.06} \\
        \midrule
        $\varepsilon \sim U(0,1)$ &  
            $\tau$ = 0.05 & 20th & 20.31 & 20.39 & 20.26 & 0.41 & 20.55 & \textbf{20.18} & \textbf{0.30} \\
            &  & 50th & 42.78 & 42.68 & 42.57 & 0.50 & 42.89 & \textbf{43.38} & \textbf{0.61} \\
            &  & 100th & 48.97 & 48.93 & 48.83 & 0.61 & 48.34 & \textbf{49.37} & \textbf{0.70} \\
            & $\tau$ = 0.50 & 20th & 21.04 & 21.28 & 21.25 & 0.19 & 20.98 & \textbf{20.69} & \textbf{0.32} \\
            &  & 50th & 44.21 & 44.30 & 44.22 & 0.22 & 43.95 & \textbf{43.67} & \textbf{0.64} \\
            &  & 100th & 50.68 & 50.88 & 50.79 & 0.26 & 50.39 & \textbf{53.09} & \textbf{0.74} \\
            & $\tau$ = 0.95 & 20th & 21.77 & 21.87 & 22.09 & 0.44 & 21.72 & \textbf{21.65} & \textbf{0.32} \\
            &  & 50th & 45.65 & 45.58 & 45.68 & 0.49 & 45.43 & \textbf{45.39} & \textbf{0.65} \\
            &  & 100th & 52.39 & 52.36 & 52.45 & 0.55 & 52.49 & \textbf{52.86} & \textbf{0.77} \\
        \midrule
        $\varepsilon \sim \mathcal{E}(1)$ & 
            $\tau$ = 0.05 & 20th & 20.31 & 20.24 & 19.96 & 0.43 & 19.94 & \textbf{19.83} & \textbf{0.32} \\
            &  & 50th & 42.78 & 42.92 & 42.93 & 0.55 & 42.98 & \textbf{44.58} & \textbf{0.63} \\
            &  & 100th & 48.97 & 49.06 & 49.05 & 0.64 & 47.24 & \textbf{48.31} & \textbf{0.81} \\
            & $\tau$ = 0.50 & 20th & 21.36 & 20.95 & 21.04 & 0.23 & 21.11 & \textbf{24.77} & \textbf{0.40} \\
            &  & 50th & 44.83 & 45.54 & 45.47 & 0.27 & 46.30 & \textbf{45.49} & \textbf{0.76} \\
            &  & 100th & 51.41 & 51.93 & 51.91 & 0.40 & 51.68 & \textbf{51.13} & \textbf{0.92} \\
            & $\tau$ = 0.95 & 20th & 25.09 & 25.08 & 25.14 & 0.80 & 23.73 & \textbf{22.73} & \textbf{0.47} \\
            &  & 50th & 52.17 & 53.15 & 52.61 & 0.90 & 54.98 & \textbf{48.48} & \textbf{1.09} \\
            &  & 100th & 60.15 & 61.24 & 60.74 & 1.08 & 59.87 & \textbf{64.54} & \textbf{1.36} \\
        \bottomrule
    \end{tabular}
  \end{center}
\end{sidewaystable}

\begin{sidewaystable}[ph!] 
    \begin{center}
    \caption{Simulated Conditional Quantiles for QR, BQR, QRNN and BQRNN for Simulation Study 2}
    \label{Sim_study_table_2}
    \begin{tabular}{cccrrrrrrr}
        \toprule
        Noise & Quantile & Obs & Theo Cond Q & QR Cond Q & BQR Cond Q & SD(BQR) & QRNN Cond Q & BQRNN Cond Q & SD(BQRNN) \\
        \midrule
        $\varepsilon \sim \N(0,1)$ & 
            $\tau$ = 0.05 & 20th & 167491.82 & -195687.34 & 10208.14 & 33.06 & 30400.87 & \textbf{181466.74} & \textbf{7400.86} \\
            &  & 50th & 3298781.46 & 2107072.28 & 24949.81 & 72.93 & 2626269.69 & \textbf{3310280.70} & \textbf{48816.57} \\
            &  & 100th & 5660909.58 & 2741102.73 & 25681.25 & 75.72 & 3312208.28 & \textbf{5376155.72} & \textbf{97349.16} \\
            & $\tau$ = 0.50 & 20th & 167496.16 & 83421.40 & 92938.13 & 102.25 & 183185.47 & \textbf{167739.24} & \textbf{2479.10} \\
            &  & 50th & 3298798.17 & 2661086.40 & 228324.14 & 282.92 & 3292073.45 & \textbf{3289479.84} & \textbf{46538.33} \\
            &  & 100th & 5660933.30 & 3550782.83 & 233865.24 & 258.28 & 5652427.31 & \textbf{5666982.50} & \textbf{80241.91} \\
            & $\tau$ = 0.95 & 20th & 167500.49 & 1184601.83 & 171360.02 & 166.45 & 193187.39 & \textbf{176826.08} & \textbf{2931.58} \\
            &  & 50th & 3298814.88 & 4731674.95 & 423329.64 & 464.87 & 3309795.50 & \textbf{3301781.56} & \textbf{46949.85} \\
            &  & 100th & 5660957.02 & 5575413.13 & 429134.06 & 433.29 & 5772437.35 & \textbf{5666088.01} & \textbf{80939.97} \\
        \midrule
        $\varepsilon \sim U(0,1)$ &  
            $\tau$ = 0.05 & 20th & 167496.29 & -195833.01 & 10208.01 & 32.89 & -27470.71 & \textbf{172408.55} & \textbf{6272.42} \\
            &  & 50th & 3298798.68 & 2107278.20 & 24949.41 & 72.62 & 2500915.26 & \textbf{3333189.64} & \textbf{47562.30} \\
            &  & 100th & 5660934.02 & 2741400.69 & 25681.03 & 75.38 & 3100036.18 & \textbf{5201780.67} & \textbf{104297.84} \\
            & $\tau$ = 0.50 & 20th & 167497.47 & 83435.89 & 92937.96 & 101.23 & 172793.77 & \textbf{168359.45} & \textbf{2583.07} \\
            &  & 50th & 3298803.25 & 2661086.35 & 228323.11 & 279.51 & 3314585.80 & \textbf{3296954.08} & \textbf{46650.01} \\
            &  & 100th & 5660940.51 & 3550796.98 & 233864.74 & 256.79 & 5607084.86 & \textbf{5661907.57} & \textbf{80101.17} \\
            & $\tau$ = 0.95 & 20th & 167498.66 & 1184587.54 & 171359.88 & 166.91 & 196767.97 & \textbf{175212.17} & \textbf{3312.33} \\
            &  & 50th & 3298807.82 & 4731690.36 & 423328.99 & 463.93 & 3316486.10 & \textbf{3303062.26} & \textbf{46776.32} \\
            &  & 100th & 5660947.00 & 5575416.01 & 429133.41 & 430.65 & 5757769.91 & \textbf{5663521.46} & \textbf{80219.88} \\
        \midrule
        $\varepsilon \sim \mathcal{E}(1)$ & 
            $\tau$ = 0.05 & 20th & 167496.29 & -195784.51 & 10208.23 & 33.02 & -172912.92 & \textbf{124688.39} & \textbf{12531.19} \\
            &  & 50th & 3298798.69 & 2107220.10 & 24949.87 & 73.13 & 2380587.55 & \textbf{3284717.78} & \textbf{49843.67} \\
            &  & 100th & 5660934.04 & 2741316.61 & 25681.36 & 75.71 & 2825010.54 & \textbf{5177969.08} & \textbf{97539.50} \\
            & $\tau$ = 0.50 & 20th & 167497.98 & 83442.50 & 92938.27 & 102.50 & 175113.10 & \textbf{170058.16} & \textbf{2557.19} \\
            &  & 50th & 3298805.21 & 2661099.63 & 228323.86 & 283.61 & 3309978.81 & \textbf{3296869.79} & \textbf{46636.29} \\
            &  & 100th & 5660943.29 & 3550827.77 & 233865.06 & 259.37 & 5603733.56 & \textbf{5662588.82} & \textbf{80087.88} \\
            & $\tau$ = 0.95 & 20th & 167504.05 & 1184595.04 & 171360.22 & 164.15 & 182663.10 & \textbf{175802.87} & \textbf{2880.89} \\
            &  & 50th & 3298828.60 & 4731683.78 & 423330.52 & 458.74 & 3337299.83 & \textbf{3306641.01} & \textbf{46833.26} \\
            &  & 100th & 5660976.50 & 5575416.49 & 429135.00 & 423.28 & 5838226.18 & \textbf{5686965.11} & \textbf{80976.40} \\
        \bottomrule
    \end{tabular}
  \end{center}
\end{sidewaystable}

\subsection{Real Data Examples}
In this section, we apply our proposed method to three real world datasets which are publicly available. These datasets will be used to demonstrate the performance of nonlinear regression methods.

The first dataset is widely used Boston Housing dataset which is available in R package MASS \citep{Venables-Ripley-2002}. It contains 506 census tracts of Boston Standard Metropolitan Statistical Area in 1970. There are 13 predictor variables and one response variable, corrected median value of owner-occupied homes (in USD 1000s). Predictor variables include per capita crime rate by town, proportion of residential land zoned for lots over 25,000 sq.ft., nitrogen oxide concentration, proportion of owner-occupied units built prior to 1940, full-value property-tax rate per $\$$10,000, and lower status of the population in percent, among others. There is high correlation among some of these predictor variables and the goal here is to determine the best fitting functional form which would help in improving the housing value forecasts.

The second dataset is the Gilgais dataset available in R package MASS. This data was collected on a line transect survey in gilgai territory in New South Wales, Australia. Gilgais are repeated mounds and depressions formed on flat land, and many-a-times are regularly distributed. The data collection with 365 sampling locations on a linear grid of 4 meters spacing aims to check if the gilgai patterns are reflected in the soil properties as well. At each of the sampling location, samples were taken at depths 0-10 cm, 30-40 cm and 80-90 cm below the surface. The input variables included pH, electrical conductivity and chloride content and were measured on a 1:5 soil:water extract from each sample. Here, the response variable is e80 (electrical conductivity in mS/cm: 80–90 cm) and we will focus on finding the true functional relationship present in the dataset.

The third dataset is concrete data which is compiled by \citet{Yeh-1998} and is available on UCI machine learning repository. It consists of 1030 records, each containing 8 input features and compressive strength of concrete as an output variable. The input features include the amounts of  ingredients in  high performance concrete (HPC) mixture which are cement, fly ash, blast furnace slag, water, superplasticizer, coarse aggregate, and fine aggregate. Moreover, age of the mixture in days is also included as one of the predictor variable. According to \citet{Yeh-1998}, the compressive strength of concrete is a highly non-linear function of the given inputs. The central purpose of the study is to predict the compressive strength of HPC using the input variables. 

In our experiments, we compare the performance of QR, BQR, QRNN, and BQRNN estimates for $f(\bm{x})$, the true functional form of the data, in both training and testing data using mean check function (or, mean tilted absolute loss function). The mean check function (MCF) is given as
\begin{equation*}
    \text{MCF} = \frac{1}{N} \sum_{i=1}^N  \rho_\tau (y_i-\hat{f}(\bm{x}_i))
\end{equation*}
where, $\rho_\tau(.)$ is defined in \autoref{check_fun} and $\hat{f}(\bm{x})$ is an estimate of $f(\bm{x})$. We resort to this comparison criterion since we don't have the theoretical conditional quantiles for the data at our disposal. For each dataset, we randomly choose $80\%$ of data points for training the model and then remaining $20\%$ is used to test the prediction ability of the fitted model. Our single hidden-layer neural network has $k=4$ hidden layer nodes and the random walk variance is chosen to be $0.01^2$. These particular choices of the number of hidden layer nodes and random walk step size are based on their optimal performance among several different choices while providing reasonable computational complexity. We perform these analyses for quantiles, $\tau = (0.05,0.25,0.50,0.75,0.95)$, and present the model comparison results for both training and testing data in \autoref{Real_data_table}.

It can be seen that our model performs comparably well with QRNN model while outperforming linear models (QR and BQR) in all the datasets. We can see that both QRNN and BQRNN have lower mean check function values for training data than their testing counterpart. This suggests that neural networks may be overfitting the data while trying to find the true underlying functional form.  The model performance of QR and BQR models is inferior compared to neural network models, particularly when the regression relationship is non-linear. Furthermore,  the Bayesian quantile regression neural network model provides uncertainty estimation as a natural byproduct which is not available in the frequentist QRNN model.

\begin{table}[ph!] 
  \begin{center}
    \caption{Real Data Applications Comparison Results (MCF Values)}
    \label{Real_data_table}
    \begin{tabular}{cccrrrr}
        \toprule
        Noise & Quantile & Sample & QR & BQR & QRNN & BNNQR \\ 
        \midrule
        Boston & 
            $\tau$ = 0.05 & Train & 0.3009 & 0.3102 & 0.2084 & \textbf{0.1832} \\
            &  & Test & 0.3733 & 0.3428 & 0.3356 & \textbf{0.5842} \\
            & $\tau$ = 0.25 & Train & 1.0403 & 1.0431 & 0.6340 & \textbf{0.6521} \\
            &  & Test & 1.2639 & 1.2431 & 1.0205 & \textbf{1.2780} \\
            & $\tau$ = 0.50 & Train & 1.4682 & 1.4711 & 0.8444 & \textbf{0.8864} \\
            &  & Test & 1.8804 & 1.8680 & 1.4638 & \textbf{1.5882} \\
            & $\tau$ = 0.75 & Train & 1.3856 & 1.3919 & 0.6814 & \textbf{0.7562} \\
            &  & Test & 1.8426 & 1.8053 & 1.3773 & \textbf{1.4452} \\
            & $\tau$ = 0.95 & Train & 0.5758 & 0.6009 & 0.2276 & \textbf{0.2206} \\
            &  & Test & 0.7882 & 0.6174 & 0.8093 & \textbf{0.6880} \\
        \midrule
        Gilgais & 
            $\tau$ = 0.05 & Train & 3.6610 & 3.7156 & 3.0613 & \textbf{2.7001} \\
            &  & Test & 3.4976 & 3.2105 & 2.9137 & \textbf{3.9163} \\
            & $\tau$ = 0.25 & Train & 13.9794 & 14.5734 & 8.6406 & \textbf{8.4565} \\
            &  & Test & 11.8406 & 11.4832 & 9.3298 & \textbf{10.1386} \\
            & $\tau$ = 0.50 & Train & 18.1627 & 21.3587 & 10.4667 & \textbf{10.7845} \\
            &  & Test & 15.8210 & 17.2037 & 13.5297 & \textbf{14.3699} \\
            & $\tau$ = 0.75 & Train & 13.6598 & 18.8357 & 7.9679 & \textbf{7.9905} \\
            &  & Test & 12.3926 & 18.2477 & 9.4711 & \textbf{10.6414} \\
            & $\tau$ = 0.95 & Train & 3.8703 & 6.4137 & 2.3289 & \textbf{2.2508} \\
            &  & Test & 4.1266 & 6.4300 & 3.0280 & \textbf{2.5586} \\
        \midrule
        Concrete & $\tau$ = 0.05 & Train & 2.9130 & 4.4500 & 2.0874 & \textbf{2.0514} \\
            &  & Test & 2.9891 & 4.2076 & 2.2021 & \textbf{2.6793} \\
            & $\tau$ = 0.25 & Train & 10.0127 & 14.7174 & 7.0063 & \textbf{7.0537} \\
            &  & Test & 9.6451 & 14.3567 & 6.9179 & \textbf{7.4069} \\
            & $\tau$ = 0.50 & Train & 13.1031 & 19.8559 & 9.3638 & \textbf{9.3728} \\
            &  & Test & 12.7387 & 18.2309 & 9.8936 & \textbf{10.9172} \\
            & $\tau$ = 0.75 & Train & 11.5179 & 17.7680 & 7.6789 & \textbf{7.3932} \\
            &  & Test & 10.8299 & 16.3257 & 8.5755 & \textbf{9.4147} \\
            & $\tau$ = 0.95 & Train & 3.9493 & 6.8747 & 2.4403 & \textbf{2.5262} \\
            &  & Test & 3.6489 & 6.8435 & 2.7768 & \textbf{4.1369} \\
        \bottomrule
    \end{tabular}
  \end{center}
\end{table}

\section{Conclusion} 

\label{Section5} 



The manuscript has developed the Bayesian neural network models for quantile estimation in a systematic way. The practical implementation of Gibbs sampling coupled with Metropolis-Hastings updates method have been discussed in detail. The method exploits the location-scale mixture representation of the asymmetric Laplace distribution which makes its implementation easier. The model can be thought as a hierarchical Bayesian model which makes use of independent normal priors for the neural network weight parameters. We also carried out similar study using a data dependent Zellner's prior \citep{Zellner-1986} for the weights of neural network, but results are not provided here to keep the discussion simple and further theoretical justifications are needed for this prior. A future work in this area could be sparsity induced priors to allow for node and layer selection in multi-layer neural network architecture. 

Further, we have developed asymptotic consistency of the posterior distribution of the neural network parameters. The presented result can be extended to a more general class of prior distributions if they satisfy the \autoref{thm-2} assumptions. Following the theory developed here, we bridge the gap between asymptotic justifications separately available for Bayesian quantile regression and Bayesian neural network regression. The theoretical arguments developed here justify using neural networks for quantile estimation in nonparametric regression problems using Bayesian methods. 

The proposed MCMC procedure has been shown to work when the number of parameters are relatively low compared to the number of observations. 
We noticed that convergence of the posterior chains take long time and there is noticeable autocorrelation left in the sampled chains even after burn-in period. We also acknowledge that our random-walk Metropolis-Hastings algorithm has small step size which might lead to slow traversal of the parameter space ultimately raising the computational cost of our algorithm. The computational complexity in machine learning methods are well-known. Further research is required in these aspects of model implementation.





\clearpage



\appendix


\clearpage

\section{Lemmas for Posterior Consistency Proof} 

\label{AppendixA} 

For all the proofs in \ref{AppendixA} and \ref{AppendixB}, we assume $\bm{X}_{p \times 1}$ to be uniformly distributed on $[0,1]^p$ and keep them fixed. Thus, $f_0(\bm{x})=f(\bm{x})=1$. Conditional on $\bm{X}$, the univariate response variable $Y$ has asymmetric Laplace distribution with location parameter determined by the neural network.  We are going to fix its scale parameter, $\sigma$, to be 1 for the posterior consistency derivations. Thus,
\begin{equation}\label{ALD neural network}
    Y |\bm{X} = \bm{x} \sim ALD \left( \beta_0 + \sum_{j=1}^k \beta_j \frac{1}{1 + \exp{ \left( -\gamma_{j0}- \sum_{h=1}^p \gamma_{jh} x_{h} \right)}}, 1, \tau \right) 
\end{equation}
The number of input variables, $p$, is taken to be fixed while the number of hidden nodes, $k$, will be allowed to grow with the sample size, n.

All the lemmas described below are taken from \citet{Lee-2000}.
\begin{lemma}\label{lemma-1}
Suppose $H_{[]}(u) \leq \log [(C_n^2d_n/u)^{d_n}], d_n=(p+2)k_n+1, k_n \leq n^a$ and $C_n\leq \exp (n^{b-a})$ for $0<a<b<1$. Then for any fixed constants $c,\epsilon >0,$ and for all sufficiently large $n, \int_0^\epsilon \sqrt{H_{[]}(u)} \leq c\sqrt{n}\epsilon^2$.
\end{lemma}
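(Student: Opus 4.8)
The plan is to substitute the given entropy bound into the integral, separate the two sources of growth (the ``height'' term $\log(C_n^2 d_n)$ and the singular part $\log(1/u)$), and then compare the resulting powers of $n$ against $\sqrt{n}$. Since both $c$ and $\epsilon$ are held \emph{fixed} while $n \to \infty$, it suffices to show that the left-hand side grows strictly slower than $n^{1/2}$, after which the inequality holds for all large $n$.

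First I would use the hypothesis $H_{[]}(u) \leq d_n \log(C_n^2 d_n/u)$ to write, for $u \in (0,\epsilon)$ and $n$ large enough that $C_n^2 d_n > \epsilon$ (so the logarithm is nonnegative),
\[
\sqrt{H_{[]}(u)} \leq \sqrt{d_n}\,\sqrt{\log(C_n^2 d_n) + \log(1/u)} \leq \sqrt{d_n}\Big(\sqrt{\log(C_n^2 d_n)} + \sqrt{\log(1/u)}\Big),
\]
where the last step is subadditivity of the square root, valid once both summands are nonnegative (if $\epsilon > 1$ one splits the integral at $u=1$ and bounds the piece on $[1,\epsilon]$ trivially, giving the same order). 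Integrating over $(0,\epsilon)$ then yields
\[
\int_0^\epsilon \sqrt{H_{[]}(u)}\,du \leq \sqrt{d_n}\,\epsilon\,\sqrt{\log(C_n^2 d_n)} + \sqrt{d_n}\,\int_0^\epsilon \sqrt{\log(1/u)}\,du.
\]

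The second integral is a fixed finite constant independent of $n$: the substitution $u = e^{-t}$ turns it into $\int_{\log(1/\epsilon)}^\infty \sqrt{t}\,e^{-t}\,dt$, which converges (indeed $\int_0^1 \sqrt{\log(1/u)}\,du = \Gamma(3/2) = \sqrt{\pi}/2$). Next I would insert the prescribed growth rates. Writing $A_n := \log(C_n^2 d_n) = 2\log C_n + \log d_n$, the bound $C_n \leq \exp(n^{b-a})$ gives $\log C_n \leq n^{b-a}$, while $d_n = (p+2)k_n + 1 \leq (p+2)n^a + 1$ gives $\log d_n = O(\log n)$ and $\sqrt{d_n} = O(n^{a/2})$. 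Hence $A_n \leq 2n^{b-a} + O(\log n)$, so $\sqrt{A_n} = O(n^{(b-a)/2})$.

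Combining, the first term on the right is $O\!\big(n^{a/2}\cdot n^{(b-a)/2}\big)\,\epsilon = O(n^{b/2})\,\epsilon$ and the second term is $O(n^{a/2})$; since $a < b$, the whole expression is $O(n^{b/2})$ for fixed $\epsilon$. Because $b < 1$ we have $b/2 < 1/2$, so the left-hand side is $o(n^{1/2})$, whereas the target $c\sqrt{n}\epsilon^2$ is exactly of order $n^{1/2}$ for fixed $c,\epsilon$. Therefore $\int_0^\epsilon \sqrt{H_{[]}(u)}\,du \leq c\sqrt{n}\epsilon^2$ for all sufficiently large $n$, as claimed. The only delicate points are verifying convergence of the singular integral $\int_0^\epsilon\sqrt{\log(1/u)}\,du$ and keeping the exponent bookkeeping honest so that it yields $b/2$ rather than something $\geq 1/2$; this is precisely where the conditions $k_n\le n^a$ and $C_n\le\exp(n^{b-a})$ with $b<1$ are essential.
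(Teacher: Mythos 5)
Your proof is correct and takes essentially the same route as the argument the paper relies on: the paper's own ``proof'' simply defers to Lemma 1 of Lee (2000), whose underlying argument is exactly your decomposition $\sqrt{d_n\log(C_n^2 d_n/u)} \le \sqrt{d_n}\bigl(\sqrt{\log(C_n^2 d_n)} + \sqrt{\log(1/u)}\bigr)$, the finiteness of $\int_0^\epsilon \sqrt{\log(1/u)}\,du$, and the same exponent bookkeeping giving a bound of order $n^{b/2} = o(\sqrt{n})$ since $a < b < 1$. Your write-up is in fact more self-contained than the paper's proof-by-citation, and the caveats you flag (splitting at $u=1$ when $\epsilon>1$, nonnegativity needed for subadditivity) are handled appropriately.
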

\begin{proof}
The proof follows from the proof of Lemma 1 from \citep[p. 634-635]{Lee-2000} in BQRNN case.
\end{proof}
For Lemmas \autoref{lemma-2}, \autoref{lemma-3} and \autoref{lemma-4}, we make use of the following notations. From \autoref{rnf}, recall \begin{equation*}
    R_n(f) = \prod_{i=1}^n \frac{f(x_i,y_i)}{f_0(x_i,y_i)}
\end{equation*}
is the ratio of likelihoods under neural network density $f$ and the true density $f_0$. $\F_n$ is the sieve as defined in \autoref{sieve} and $A_\epsilon$ is the Hellinger neighborhood of the true density $f_0$ as in \autoref{Hellinger neighborhood}.

\begin{lemma}\label{lemma-2}
$\underset{f\in A_\epsilon^c \cap \F_n}{\sup} R_n(f) \leq 4 \exp(-c_2n\epsilon^2)$ a.s. for sufficiently large $n$.
\end{lemma}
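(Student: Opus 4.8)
The plan is to obtain this uniform bound on the likelihood ratio as a direct consequence of the exponential probability inequality of \citet[Theorem~1, pp.348-349]{Wong-Shen-1995}, combined with a Borel--Cantelli argument to upgrade the resulting in-probability statement to the almost sure ``for sufficiently large $n$'' conclusion. The restriction of the supremum to the sieve $\F_n$ is essential here: it is only on $\F_n$, where every weight is bounded by $C_n$ in absolute value, that the associated class of densities has finite bracketing entropy, which is what makes the machinery of \citet{Wong-Shen-1995} applicable.

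First I would verify the entropy hypothesis needed to invoke \citet{Wong-Shen-1995}. From the bracketing-number estimate in \autoref{cover}, the Hellinger bracketing entropy of the class $\F^*$ of neural network densities with parameters confined to $\F_n$ satisfies $H_{[]}(u) \leq \log[(C_n^2 d_n/u)^{d_n}]$ with $d_n = (p+2)k_n + 1$. This is precisely the premise of \autoref{lemma-1}, which, under the growth rates $k_n \leq n^a$ and $C_n \leq \exp(n^{b-a})$ with $0<a<b<1$, yields that for every fixed $c,\epsilon>0$ and all sufficiently large $n$, $\int_0^\epsilon \sqrt{H_{[]}(u)}\,\d u \leq c\sqrt{n}\,\epsilon^2$. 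Applying \autoref{lemma-1} (with $\epsilon$ replaced by a fixed multiple of $\epsilon$ where needed), the truncated entropy integral appearing in the hypothesis of \citet{Wong-Shen-1995} is likewise bounded by a constant multiple of $\sqrt{n}\,\epsilon^2$ for $n$ large, so their condition is met.

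Next I would apply \citet[Theorem~1]{Wong-Shen-1995} to the family $A_\epsilon^c \cap \F_n$, i.e.\ densities within the sieve whose Hellinger distance to $f_0$ is at least $\epsilon$. This produces constants $c_1,c_2>0$, depending only on $\epsilon$ and the entropy constants and not on $n$, such that
\begin{equation*}
    P\!\left(\sup_{f \in A_\epsilon^c \cap \F_n} R_n(f) \geq \exp(-c_1 n \epsilon^2)\right) \leq 4\exp(-c_2 n \epsilon^2).
\end{equation*}
Because the right-hand side is summable in $n$, the first Borel--Cantelli lemma guarantees that, almost surely, the event inside the probability occurs for only finitely many $n$; hence almost surely, for all sufficiently large $n$, $\sup_{f\in A_\epsilon^c \cap \F_n} R_n(f) < \exp(-c_1 n\epsilon^2)$. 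Choosing the constants so that $\exp(-c_1 n\epsilon^2) \leq 4\exp(-c_2 n \epsilon^2)$ (for instance with $c_1 \geq c_2$) then delivers the stated bound.

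The main obstacle is the first step: ensuring the entropy integral condition of \citet{Wong-Shen-1995} holds \emph{uniformly} as $n$ grows. The bracketing entropy carries the increasing factors $d_n$ and $C_n^2$, so a naive estimate would diverge; the purpose of the carefully calibrated sieve growth $k_n \leq n^a$ and $C_n \leq \exp(n^{b-a})$ is exactly to force $\int_0^\epsilon \sqrt{H_{[]}(u)}\,\d u = o(\sqrt{n}\,\epsilon^2)$, and this verification is already packaged in \autoref{lemma-1}. Once that entropy bound is secured, the remainder is the routine Wong--Shen-plus-Borel--Cantelli passage sketched above.
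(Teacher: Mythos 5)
You have the right skeleton---bracketing entropy, then Lemma \ref{lemma-1}, then \citet[Theorem~1]{Wong-Shen-1995}, then passage to an almost-sure statement---and that second half does match the paper, which simply imports it from \citet{Lee-2000}. The genuine gap is at your first step: you treat the bracketing bound \autoref{cover} as an available fact, but in this paper that bound is precisely what the proof of this lemma must establish. In the main text, \autoref{cover} appears only as an outline, with its justification explicitly deferred to the appendix lemmas, so invoking it here is circular. Passing from the $L_\infty$ covering number \autoref{Covering number} to \autoref{cover} via \citet[Theorem~2.7.11]{van_der_Vaart-Wellner-1996} requires verifying that theorem's hypothesis: a parameter-Lipschitz bound $\abs{\sqrt{f_t(x,y)}-\sqrt{f_s(x,y)}}\leq d^*(s,t)\,F(x,y)$ for the square roots of the ALD densities, with $d^*$ the $L_\infty$ distance on parameter vectors and some envelope $F$. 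This verification is the one piece of the argument that is new relative to \citet{Lee-2000} (who treats the Gaussian case), and it occupies essentially the whole of the paper's proof: because the ALD density involves the indicator $I_{(y\leq\mu)}$, the square-root difference must be controlled by a case analysis over the relative positions of $y$, $\mu_s(x)$, $\mu_t(x)$ (three subcases when $\mu_s\leq\mu_t$, and symmetrically when $\mu_s>\mu_t$), using the elementary inequality $\abs{1-e^{-z}}\leq z$ for $z\geq 0$, to obtain $\abs{\sqrt{f_t}-\sqrt{f_s}}\leq\frac{1}{2}\abs{\mu_t-\mu_s}$; the sigmoid bounds and the sieve constraint that all parameters lie in $[-C_n,C_n]$ then give $\abs{\sqrt{f_t}-\sqrt{f_s}}\leq\frac{C_n d}{2}\norm{t-s}_\infty$, i.e.\ the envelope $F\equiv C_n d/2$, which is exactly what produces the $C_n^2 d$ factor inside \autoref{cover}. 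Without this step, the hypothesis of Lemma \ref{lemma-1} (which is stated conditionally on that entropy bound) has not been shown to hold for the quantile-regression model, and the Wong--Shen machinery has nothing to stand on.

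Once that ALD-specific Lipschitz/entropy computation is supplied, the remainder of your argument---applying the Wong--Shen inequality over $A_\epsilon^c\cap\F_n$ and a Borel--Cantelli argument to convert the exponential probability bound into the stated almost-sure bound for all sufficiently large $n$---is the same route the paper takes (by reference to Lee's proof), so your proposal would then be complete.
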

\begin{proof}
Using the outline of the proof of Lemma 2 from \citet[p. 635]{Lee-2000}, first we have to bound the Hellinger bracketing entropy using \citet[Theorem~2.7.11 on p.164]{van_der_Vaart-Wellner-1996}. Next we use Lemma \ref{lemma-1} to show that the conditions of \citet[Theorem~1 on p.348-349]{Wong-Shen-1995} hold and finally we apply that theorem to get the result presented in the Lemma 2. 

In our case of BQRNN, we only need to derive first step using ALD density mentioned in \autoref{ALD neural network}. And rest of the steps follow from the proof given in \citet{Lee-2000}. As we are looking for the Hellinger bracketing entropy for neural networks, we use $L_2$ norm on the square root of the density functions, $f$. The $L_\infty$ covering number was computed above in \autoref{Covering number}, so here $d^*=L_\infty$. The version of \citet[Theorem~2.7.11]{van_der_Vaart-Wellner-1996} that we are interested in is 
\begin{align*}
    & \text{If}  \enskip \abs{\sqrt{f_t(x,y)}-\sqrt{f_s(x,y)}} \leq d^*(s,t) F(x,y) \quad \text{for some } F, \\
    & \text{then,} \enskip N_{[]}(2\epsilon \norm{F}_2,\F^*,\norm{.}_2) \leq N(\epsilon,\F_n,d^*)
\end{align*}
Now let's start by defining some notations,
\begin{align}
     f_t(x,y) & = \tau(1-\tau) \exp \left(-(y-\mu_t(x)) (\tau-I_{(y\leq\mu_t(x))}) \right), \nonumber \\
    & \hspace{1mm} \text{ where,} \enskip \mu_t(x) = \beta_0^t + \sum_{j=1}^k  \frac{\beta_j^t}{1 + \exp{(-A_j(x))}} \enskip \text{and} \enskip A_j(x) = \gamma_{j0}^t + \sum_{h=1}^p \gamma_{jh}^t x_{h} \label{mu_t}\\
    f_s(x,y) & = \tau(1-\tau) \exp \left(-(y-\mu_s(x)) (\tau-I_{(y\leq\mu_s(x))}) \right), \nonumber \\
    & \hspace{1mm} \text{where,} \enskip \mu_s(x) = \beta_0^s + \sum_{j=1}^k  \frac{\beta_j^s}{1 + \exp{(-B_j(x))}} \enskip \text{and} \enskip B_j(x) = \gamma_{j0}^s + \sum_{h=1}^p \gamma_{jh}^s x_{h} \label{mu_s}
\end{align}
For notational convenience, we drop $x$ and $y$ from $f_s(x,y)$, $f_t(x,y)$, $\mu_s(x)$, $\mu_t(x)$, $B_j(x)$, and $A_j(x)$ and denote them as $f_s$, $f_t$, $\mu_s$, $\mu_t$, $B_j$, and $A_j$. 

\begin{align}\label{mod_term}
    \abs{\sqrt{f_t}-\sqrt{f_s}} & = \sqrt{\tau(1-\tau)} \abs{\exp \left(-\frac{1}{2}(y-\mu_t) (\tau-I_{(y\leq\mu_t)}) \right) - \exp \left(-\frac{1}{2}(y-\mu_s) (\tau-I_{(y\leq\mu_s)}) \right)}\nonumber \\
    & \hspace{1mm} \text{As,} \enskip \tau \in (0,1) \text{ is fixed.}\nonumber\\
    & \leq \frac{1}{2} \abs{\exp \left(-\frac{1}{2}(y-\mu_t) (\tau-I_{(y\leq\mu_t)}) \right) - \exp \left(-\frac{1}{2}(y-\mu_s) (\tau-I_{(y\leq\mu_s)}) \right)}
\end{align}
Now let's separate above term into two cases when: (a) $\mu_s \leq \mu_t$ and (b) $\mu_s > \mu_t$. Further let's consider case-$a$ and break it into three subcases when: (i) $y \leq \mu_s \leq \mu_t$, (ii) $\mu_s < y \leq \mu_t$, and (iii) $\mu_s \leq \mu_t < y$.
\vspace{-2mm}
\begin{enumerate}[label=Case-a (\roman*),wide=0pt,leftmargin=.1in] 
    \item $y \leq \mu_s \leq \mu_t$ \\
    \hspace{3mm} The \autoref{mod_term} simplifies to
        \begin{align}\label{calculus ineq}
            \hphantom{\abs{\sqrt{f_t}-\sqrt{f_s}}}& \frac{1}{2} \abs{\exp \left(-\frac{1}{2}(y-\mu_t) (\tau-1) \right) -  \exp \left(-\frac{1}{2}(y-\mu_s) (\tau-1) \right)} \nonumber \\
            & = \frac{1}{2} \abs{\exp \left(-\frac{1}{2}(y-\mu_s) (\tau-1) \right)} \abs{\exp \left(-\frac{1}{2}(\mu_s-\mu_t) (\tau-1) \right) -1} \nonumber \\
            & \quad \text{As first term in modulus is } \leq 1 \nonumber \\
            & \leq \frac{1}{2} \abs{1 - \exp \left(-\frac{1}{2}(\mu_t-\mu_s) (1-\tau) \right)} \nonumber \\
            & \quad \text{Note: } 1-\exp(-z) \leq z \enskip \forall z \in \R \implies \abs{1-\exp(-z)} \leq \abs{z} \enskip \forall z \geq 0 \\
            & \leq \frac{1}{4}\abs{\mu_t-\mu_s}(1-\tau) \nonumber \\
            & \leq \frac{1}{4}\abs{\mu_t-\mu_s} \nonumber \\
            & \leq \frac{1}{2}\abs{\mu_t-\mu_s} \nonumber
        \end{align}
    \item $\mu_s < y \leq \mu_t$ \\
    \hspace{3mm} The \autoref{mod_term} simplifies to
        \begin{align*}
            \hphantom{\abs{\sqrt{f_t}-\sqrt{f_s}}}& \frac{1}{2} \abs{\exp \left(-\frac{1}{2}(y-\mu_t) (\tau-1) \right) -  \exp \left(-\frac{1}{2}(y-\mu_s) \tau \right)}\\
            & = \frac{1}{2} \abs{\exp \left(-\frac{1}{2}(y-\mu_s) (\tau-1) \right) -1 +1 - \exp \left(-\frac{1}{2}(y-\mu_s) \tau \right)} \\
            & \leq  \frac{1}{2} \abs{1-\exp \left(-\frac{1}{2}(y-\mu_t) (\tau-1) \right)} + \frac{1}{2} \abs{1 - \exp \left(-\frac{1}{2}(y-\mu_s) \tau \right)}  \\ 
            & \quad \text{Let's use calculus inequality mentioned in \ref{calculus ineq}} \\
            & \leq \frac{1}{4}\abs{(y-\mu_t)(\tau-1)} + \frac{1}{4}\abs{(y-\mu_s)\tau} \\
            & \quad \text{Both terms are positive so we will combine them in one modulus} \\
            & = \frac{1}{4} \abs{(y-\mu_t)(\tau-1) + (y-\mu_t+\mu_t-\mu_s)\tau} \\
            & = \frac{1}{4} \abs{(y-\mu_t)(2\tau-1) + (\mu_t-\mu_s)\tau} \\
            & \leq \frac{1}{4} \left[\abs{(y-\mu_t)}\abs{2\tau-1} + \abs{\mu_t-\mu_s}\tau \right] \\
            & \text{Here, } \abs{y-\mu_t} \leq \abs{\mu_t-\mu_s} \enskip \text{and} \enskip \abs{2\tau-1} \leq 1 \\
            & \leq \frac{1}{2}\abs{\mu_t-\mu_s} 
        \end{align*}
    \item $\mu_s \leq \mu_t < y$ \\
    \hspace{3mm} The \autoref{mod_term} simplifies to
        \begin{align*}
            \hphantom{\abs{\sqrt{f_t}-\sqrt{f_s}}}& \frac{1}{2} \abs{\exp \left(-\frac{1}{2}(y-\mu_t) \tau \right) -  \exp \left(-\frac{1}{2}(y-\mu_s)\tau \right)} \\
            & = \frac{1}{2} \abs{\exp \left(-\frac{1}{2}(y-\mu_t) \tau \right)} \abs{1-\exp \left(-\frac{1}{2}(\mu_t-\mu_s) \tau \right)}  \\
            & \quad \text{As first term in modulus is } \leq 1 \\
            & \leq \frac{1}{2} \abs{1 - \exp \left( -\frac{1}{2} (\mu_t-\mu_s) \tau \right)} \\
            & \quad \text{Using the calculus inequality mentioned in \ref{calculus ineq}} \hphantom{\text{-------------------------}} \\
            & \leq \frac{1}{4}\abs{\mu_t-\mu_s}\tau \nonumber \\
            & \leq \frac{1}{4}\abs{\mu_t-\mu_s} \nonumber \\
            & \leq \frac{1}{2}\abs{\mu_t-\mu_s} \nonumber            
        \end{align*}
\end{enumerate}
We can similarly bound the \autoref{mod_term} in case-(b) where $\mu_s > \mu_t$ by $\abs{\mu_t-\mu_s}/2$. Now,
\begin{align}
    \abs{\sqrt{f_t}-\sqrt{f_s}} & \leq \frac{1}{2}\abs{\mu_t-\mu_s} \nonumber\\
    & \quad  \text{Now, let's substitute $\mu_t$ and $\mu_s$ from \ref{mu_t} and \ref{mu_s}} \nonumber\\
    & = \frac{1}{2} \abs{\beta_0^t + \sum_{j=1}^k  \frac{\beta_j^t}{1 + \exp{(-A_j)}} - \beta_0^s - \sum_{j=1}^k  \frac{\beta_j^s}{1 + \exp{(-B_j)}}} \nonumber\\
    & \leq \frac{1}{2} \left[ \abs{\beta_0^t - \beta_0^s} + \sum_{j=1}^k  \abs{\frac{\beta_j^t}{1 + \exp{(-A_j)}} -\frac{\beta_j^s}{1 + \exp{(-B_j)}}} \right] \nonumber\\
    & = \frac{1}{2} \left[ \abs{\beta_0^t - \beta_0^s} + \sum_{j=1}^k  \abs{\frac{\beta_j^t-\beta_j^s+\beta_j^s}{1 + \exp{(-A_j)}} -\frac{\beta_j^s}{1 + \exp{(-B_j)}}} \right] \nonumber\\
    & = \frac{1}{2} \left[ \abs{\beta_0^t - \beta_0^s} + \sum_{j=1}^k  \frac{\abs{\beta_j^t-\beta_j^s}}{1 + \exp{(-A_j)}} + \sum_{j=1}^k \abs{\beta_j^s} \abs{\frac{1}{1 + \exp{(-A_j)}} -\frac{1}{1 + \exp{(-B_j)}}} \right] \nonumber\\
    & \quad \text{Recall that} \enskip \abs{\beta_j^s} \leq C_n \nonumber\\
    & \leq \frac{1}{2} \left[ \abs{\beta_0^t - \beta_0^s} + \sum_{j=1}^k  \abs{\beta_j^t-\beta_j^s} + \sum_{j=1}^k C_n \abs{\frac{\exp(-B_j) - \exp(-A_j)}{(1 + \exp (-A_j))(1 + \exp(-B_j))}} \right] \label{exp_diff}
\end{align}
\begin{align*}
    \text{Note:} \enskip \abs{\exp(-B_j) - \exp(-A_j)} & = \left\{ \begin{matrix} \exp(-A_j)(1-\exp(-(B_j-A_j))), & \enskip \text{when} \enskip B_j-A_j \geq 0 \\ exp(-B_j)(1-\exp(-(A_j-B_j))), & \enskip \text{when} \enskip A_j-B_j \geq 0 \end{matrix} \right. \\
    & \quad \text{Using the calculus inequality mentioned in \ref{calculus ineq}} \hphantom{\text{-------------------------}} \\
    & \leq \left\{ \begin{matrix} \exp(-A_j)(B_j-A_j), & \enskip \text{when} \enskip B_j-A_j \geq 0 \\ \exp(-B_j)(A_j-B_j), & \enskip \text{when} \enskip A_j-B_j \geq 0 \end{matrix} \right. 
\end{align*}
\begin{align*}
    \text{So,} \enskip \abs{\frac{\exp(-B_j) - \exp(-A_j)}{(1 + \exp (-A_j))(1 + \exp(-B_j))}} & \leq \left\{ \begin{matrix} \frac{\exp(-A_j)(B_j-A_j)}{(1 + \exp (-A_j))(1 + \exp(-B_j))}, & \enskip \text{when} \enskip B_j-A_j \geq 0 \\ \frac{\exp(-B_j)(A_j-B_j)}{(1 + \exp (-A_j))(1 + \exp(-B_j))}, & \enskip \text{when} \enskip A_j-B_j \geq 0 \end{matrix} \right. \\
    & \leq \abs{A_j-B_j}
\end{align*}
Hence we can bound the \autoref{exp_diff} as follows
\begin{align*}
    \abs{\sqrt{f_t}-\sqrt{f_s}} & \leq \frac{1}{2} \left[ \abs{\beta_0^t - \beta_0^s} + \sum_{j=1}^k  \abs{\beta_j^t-\beta_j^s} + \sum_{j=1}^k C_n \abs{A_j-B_j} \right] \\
    & \quad  \text{Now, let's substitute $A_j$ and $B_j$ from \ref{mu_t} and \ref{mu_s}} \nonumber\\
    & \leq \frac{1}{2} \left[ \abs{\beta_0^t - \beta_0^s} + \sum_{j=1}^k  \abs{\beta_j^t-\beta_j^s} + \sum_{j=1}^k C_n \abs{\gamma_{j0}^t + \sum_{h=1}^p \gamma_{jh}^t x_{h} - \gamma_{j0}^s - \sum_{h=1}^p \gamma_{jh}^s x_{h}} \right] \\
    & \leq \frac{1}{2} \left[ \abs{\beta_0^t - \beta_0^s} + \sum_{j=1}^k  \abs{\beta_j^t-\beta_j^s} + \sum_{j=1}^k C_n \left( \abs{\gamma_{j0}^t - \gamma_{j0}^s} + \sum_{h=1}^p \abs{x_h} \abs{\gamma_{jh}^t - \gamma_{jh}^s } \right) \right] \\
    & \quad \text{Recall that} \enskip \abs{x_h} \leq 1 \enskip \text{and w.l.o.g assume} \enskip C_n > 1 \\
    & \leq \frac{C_n}{2} \left[ \abs{\beta_0^t - \beta_0^s} + \sum_{j=1}^k  \abs{\beta_j^t-\beta_j^s} + \sum_{j=1}^k \left( \abs{\gamma_{j0}^t - \gamma_{j0}^s} + \sum_{h=1}^p \abs{\gamma_{jh}^t - \gamma_{jh}^s } \right) \right] \\
    & \leq \frac{C_n d}{2} \norm{t-s}_\infty
\end{align*}
Now rest of the steps will follow from the proof of Lemma 2 given in \citet[p. 635-636]{Lee-2000}. 
\end{proof}

\begin{lemma}\label{lemma-3}
 If there exists a constant $r>0$ and $N$, such that $\F_n$ satisfies $\pi_n(\F_n^c)<\exp (-nr), \forall n\geq N$, then there exists a constant $c_2$ such that $\int_{A_\epsilon^c} R_n(f) \d \pi_n(f) < \exp(-nr/2) + \exp(-nc_2\epsilon^2)$ except on a set of probability tending to zero.
\end{lemma}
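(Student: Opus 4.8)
The plan is to split the region of integration $A_\epsilon^c$ according to whether $f$ lies inside or outside the sieve $\F_n$, and to bound the two resulting pieces by entirely different mechanisms. Writing
\[
\int_{A_\epsilon^c} R_n(f)\,\d\pi_n(f) = \int_{A_\epsilon^c \cap \F_n} R_n(f)\,\d\pi_n(f) + \int_{A_\epsilon^c \cap \F_n^c} R_n(f)\,\d\pi_n(f),
\]
the first (in-sieve) term is controlled using the uniform likelihood-ratio bound already established in Lemma \ref{lemma-2}, while the second (out-of-sieve) term is controlled by a Fubini--Markov argument that uses only the prior mass hypothesis $\pi_n(\F_n^c) < \exp(-nr)$.

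For the first term, I would invoke Lemma \ref{lemma-2}, which gives $\sup_{f \in A_\epsilon^c \cap \F_n} R_n(f) \leq 4\exp(-c_2 n\epsilon^2)$ almost surely for all sufficiently large $n$. Since $\pi_n$ is a probability measure, integrating this uniform pointwise bound over $A_\epsilon^c \cap \F_n$ yields $\int_{A_\epsilon^c \cap \F_n} R_n(f)\,\d\pi_n(f) \leq 4\exp(-c_2 n\epsilon^2)$. The harmless multiplicative factor $4$ can be absorbed into the exponent by slightly shrinking $c_2$ for large $n$ (since $4 \leq \exp(n(c_2-c_2')\epsilon^2)$ for any $c_2' < c_2$ and $n$ large), giving a bound of the advertised form $\exp(-nc_2\epsilon^2)$.

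For the second term, the key observation is that every $f$ in the support of $\pi_n$ is a proper ALD density, so under the i.i.d.\ sampling from $f_0$ one has $\mathbb{E}_{f_0}[R_n(f)] = \prod_{i=1}^n \int \frac{f(\bm{x},y)}{f_0(\bm{x},y)}\, f_0(\bm{x},y)\,\d\bm{x}\,\d y = 1$. Because $R_n \geq 0$, Tonelli's theorem justifies interchanging the expectation with the prior integral, whence
\[
\mathbb{E}_{f_0}\!\left[\int_{\F_n^c} R_n(f)\,\d\pi_n(f)\right] = \int_{\F_n^c} \mathbb{E}_{f_0}[R_n(f)]\,\d\pi_n(f) = \pi_n(\F_n^c) < \exp(-nr).
\]
Applying Markov's inequality at the threshold $\exp(-nr/2)$ then gives $P_{f_0}\!\big(\int_{\F_n^c} R_n(f)\,\d\pi_n(f) > \exp(-nr/2)\big) \leq \exp(-nr)/\exp(-nr/2) = \exp(-nr/2) \to 0$. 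Since $A_\epsilon^c \cap \F_n^c \subseteq \F_n^c$ and $R_n \geq 0$, the same bound holds for the second term except on an event whose probability tends to zero.

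Combining the two pieces yields $\int_{A_\epsilon^c} R_n(f)\,\d\pi_n(f) < \exp(-nr/2) + \exp(-nc_2\epsilon^2)$ outside an event of vanishing probability, as claimed. The only genuinely delicate points are the measure-theoretic interchange (clean here, since nonnegativity of $R_n$ lets Tonelli apply without any integrability caveat) and the identity $\mathbb{E}_{f_0}[R_n(f)] = 1$, which hinges on each candidate $f$ being a normalized density; both are immediate in the ALD setting. I do not expect a serious obstacle, as the heavy lifting --- the entropy-based control of $R_n$ over $A_\epsilon^c \cap \F_n$ --- has already been discharged in Lemma \ref{lemma-2}.
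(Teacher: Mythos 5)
Your proposal is correct and takes essentially the same approach as the paper's proof, which simply defers to Lemma 3 of \citet{Lee-2000}: the identical decomposition of $A_\epsilon^c$ into $A_\epsilon^c \cap \F_n$ and $A_\epsilon^c \cap \F_n^c$, with Lemma \ref{lemma-2} bounding the in-sieve integral and a Fubini--Markov argument (using $\mathbb{E}_{f_0}[R_n(f)] \leq 1$ together with the prior mass condition $\pi_n(\F_n^c) < \exp(-nr)$) bounding the out-of-sieve integral. Your absorption of the factor $4$ into the exponent by slightly shrinking $c_2$ is a harmless detail handled the same way in that reference.
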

\begin{proof}
The proof is same as the proof of Lemma 3 from \citep[p. 636]{Lee-2000} in BQRNN scenario.
\end{proof}

\begin{lemma}\label{lemma-4}
Let $K_\delta$ be the KL-neighborhood as in \autoref{KL neighborhood}. Suppose that for all  $\delta,\nu \hspace{1mm} > 0, \exists \hspace{1mm} N$ s.t. $\pi_n(K_\delta) \geq \exp (-n\nu),\hspace{1mm} \forall n \geq N$. Then for all $\varsigma > 0$ and sufficiently large $n$, $\int R_n(f) d\pi_n(f) > e^{-n\varsigma}$ except on a set of probability going to zero.
\end{lemma}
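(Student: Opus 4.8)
The plan is to bound the denominator $\int R_n(f)\,\d\pi_n(f)$ from below by restricting the integral to the KL-neighborhood $K_\delta$ and then exploiting the prior-mass lower bound in the hypothesis. Concretely, I would first write
\begin{equation*}
\int R_n(f)\,\d\pi_n(f) \geq \int_{K_\delta} R_n(f)\,\d\pi_n(f) = \pi_n(K_\delta)\int_{K_\delta} R_n(f)\,\d\tilde{\pi}_n(f),
\end{equation*}
where $\tilde{\pi}_n = \pi_n|_{K_\delta}/\pi_n(K_\delta)$ is the prior renormalized to $K_\delta$. Writing $R_n(f) = \exp(n\Lambda_n(f))$ with $\Lambda_n(f) = \frac{1}{n}\sum_{i=1}^n \log\{f(\bm{x_i},y_i)/f_0(\bm{x_i},y_i)\}$ and applying Jensen's inequality to the convex map $x\mapsto e^{nx}$ under the probability measure $\tilde{\pi}_n$ gives
\begin{equation*}
\int_{K_\delta} R_n(f)\,\d\tilde{\pi}_n(f) \geq \exp\left(n\int_{K_\delta}\Lambda_n(f)\,\d\tilde{\pi}_n(f)\right).
\end{equation*}

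Next, by Fubini, $\int_{K_\delta}\Lambda_n(f)\,\d\tilde{\pi}_n(f) = \frac{1}{n}\sum_{i=1}^n W_{n,i}$, where $W_{n,i} = \int_{K_\delta}\log\{f(\bm{x_i},y_i)/f_0(\bm{x_i},y_i)\}\,\d\tilde{\pi}_n(f)$. For each fixed $n$ these are i.i.d.\ (over $i$) with mean
\begin{equation*}
\mathbb{E}_{f_0}[W_{n,1}] = -\int_{K_\delta} D_K(f_0,f)\,\d\tilde{\pi}_n(f) \geq -\delta,
\end{equation*}
the inequality holding because $D_K(f_0,f)\leq\delta$ throughout $K_\delta$. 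Applying Chebyshev's inequality to the row-wise i.i.d.\ triangular array $\{W_{n,i}\}$, I would conclude that for any $\eta>0$, $\frac{1}{n}\sum_{i=1}^n W_{n,i} > -\delta-\eta$ except on a set of probability tending to zero. Chaining the three displays with the hypothesis $\pi_n(K_\delta)\geq e^{-n\nu}$ then gives $\int R_n(f)\,\d\pi_n(f) > \exp(-n(\nu+\delta+\eta))$ on that same high-probability set. Finally, given $\varsigma>0$, I would choose $\delta,\nu,\eta$ with $\nu+\delta+\eta<\varsigma$ (e.g.\ each equal to $\varsigma/4$), which delivers the claim for all sufficiently large $n$.

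The delicate step is justifying the concentration of $\frac{1}{n}\sum_i W_{n,i}$ around its mean, since both the number of summands and the law of $W_{n,i}$ vary with $n$; a plain law of large numbers does not directly apply to such a triangular array. What is needed is a bound on $\mathrm{Var}_{f_0}(W_{n,1})$ that is uniform in $n$, and by Minkowski's inequality it suffices to control $\sup_{f\in K_\delta}\mathrm{Var}_{f_0}\!\left(\log\{f(\bm{X},Y)/f_0(\bm{X},Y)\}\right)$. I note that this denominator bound is essentially distribution-agnostic, which is why it parallels \citet{Lee-2000} directly; the only place the ALD structure of \ref{ALD neural network} could enter a self-contained argument is precisely this second-moment control, where it is benign: $\log f$ is affine in $y$ through the check-loss, so its variance under $f_0$ is governed by the exponentially light ALD tails and by the location parameters, which are pinned down once $D_K(f_0,f)\leq\delta$. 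This keeps $\mathrm{Var}_{f_0}(W_{n,1})$ bounded and lets Chebyshev close the argument; the remaining manipulations (Jensen, Fubini, and the final choice of constants) are routine.
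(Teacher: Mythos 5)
Your opening moves (restricting the integral to $K_\delta$, renormalizing to $\tilde\pi_n$, invoking $\pi_n(K_\delta)\geq e^{-n\nu}$, and Jensen's inequality to bring the logarithm inside the prior integral) coincide with the argument the paper inherits from Lemma 5 of \citet{Lee-2000}. The genuine gap is exactly at the step you yourself flag as delicate, and your proposed repair does not hold. Membership in $K_\delta$ controls only the first moment $\mathbb{E}_{f_0}[\log(f_0/f)]=D_K(f_0,f)\leq\delta$; it does not ``pin down'' the location functions in any sense strong enough to control second moments. Writing $\Delta(\bm{x})=\mu_f(\bm{x})-\mu_0(\bm{x})$, the ALD structure gives $D_K(f_0,f)=\int d(\Delta(\bm{x}))\,\d\bm{x}$ with $d(t)\asymp\min(|t|,t^2)$, so a density whose location function is of size $M$ on a set of measure of order $\delta/M$ and equal to $\mu_0$ elsewhere (achievable with two steep sigmoid nodes, hence in the support of the Gaussian prior) lies in $K_\delta$ for every $M$, while $\mathrm{Var}_{f_0}\left(\log(f/f_0)\right)\gtrsim M\delta\to\infty$. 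Thus $\sup_{f\in K_\delta}\mathrm{Var}_{f_0}\left(\log(f/f_0)\right)=\infty$, and the mixture variance $\mathrm{Var}(W_{n,1})\leq\int\mathbb{E}_{f_0}[(\log(f/f_0))^2]\,\d\tilde\pi_n(f)$ can only be bounded through the normalizing factor $1/\pi_n(K_\delta)\leq e^{n\nu}$, which is exponentially far from the $o(n)$ that Chebyshev requires; restricting to the sieve does not help either, since on $\F_n$ one only has $\|\mu_f\|_\infty\leq (k_n+1)C_n$ with $C_n\leq\exp(n^{b-a})$. So the concentration step of your triangular-array argument cannot be closed by the route you sketch.

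The paper's proof (via \citet{Lee-2000}, following \citet{Barron-et-al-1999}) avoids second moments entirely, and this is the idea you are missing: do not linearize $\log\int R_n(f)\,\d\tilde\pi_n(f)$ into per-observation averages. Instead regard $q_n=\int\prod_{i=1}^n f(\bm{x_i},y_i)\,\d\tilde\pi_n(f)$ as a single joint predictive density. The same Jensen inequality you used, kept inside the expectation, gives the first-moment bound $D_K(f_0^n,q_n)\leq\int nD_K(f_0,f)\,\d\tilde\pi_n(f)\leq n\delta$ --- this is precisely the bound on ``the KL distance between $f_0$ and $\int f\,\d\pi_n(f)$'' referred to in the paper's outline in Section \ref{Section3}. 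Then Markov's inequality applied to $(\log(f_0^n/q_n))_+$, whose mean is at most $D_K(f_0^n,q_n)+1\leq n\delta+1$, yields $P\left(\int R_n(f)\,\d\tilde\pi_n(f)\leq e^{-n\varsigma'}\right)\leq(n\delta+1)/(n\varsigma')\to\delta/\varsigma'$. This limit is not zero for fixed $\delta$, but the target event $\{\int R_n(f)\,\d\pi_n(f)\leq e^{-n\varsigma}\}$ does not depend on $\delta$, and the hypothesis holds for every $\delta>0$, so letting $\delta\downarrow 0$ gives $\limsup_n P\leq\inf_{\delta>0}\delta/\varsigma'=0$. In short, the correct argument trades your variance-based concentration (unavailable here) for a one-sided Markov bound that needs only the KL control supplied by $K_\delta$, with the smallness of the exceptional probability coming from the arbitrariness of $\delta$ rather than from a law of large numbers.
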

\begin{proof}
The proof is same as the proof of Lemma 5 from \citep[p. 637]{Lee-2000} in BQRNN scenario.
\end{proof} 

\begin{lemma}\label{lemma-5}
Suppose that $\mu$ is a neural network regression with parameters $(\theta_1,\dots\theta_d)$, and let $\tilde{\mu}$ be another neural network with parameters $(\tilde{\theta}_1,\dots\tilde{\theta}_{\tilde{d}_n})$. Define $\theta_i=0$ for $i >d$ and $\tilde{\theta}_j=0$ for $j>\tilde{d}_n$. Suppose that the number of nodes of $\mu$ is k, and that the number of nodes of $\tilde{\mu}$ is $\tilde{k}_n=O(n^a)$ for some $a$, $0<a<1$. Let 
\begin{equation}
\label{mdef}
    M_\varsigma =\{\tilde{\mu}\Big| \abs{\theta_i-\tilde{\theta}_i} \leq \varsigma , i=1,2,\dots\}
\end{equation} 
Then for any $\tilde{\mu} \in M_\varsigma$ and for sufficiently large n,
\begin{equation*}
    \underset{x\in \mathcal{X}}{\sup}(\tilde{\mu}(x)-\mu(x))^2 \leq (5n^a)^2\varsigma^2
\end{equation*}
\end{lemma}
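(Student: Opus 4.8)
The plan is to bound the sup-norm distance between the two network outputs by a Lipschitz-type argument in the parameters, reusing the elementary estimates already established in the proof of \autoref{lemma-2}. First I would put both networks on a common footing: using the padding convention $\theta_i=0$ for $i>d$, rewrite $\mu$ as a network with $\tilde{k}_n$ hidden nodes whose weights $\beta_j,\gamma_{j0},\gamma_{jh}$ vanish for $j>k$, so that
\[
\tilde{\mu}(x)-\mu(x) = (\tilde{\beta}_0-\beta_0) + \sum_{j=1}^{\tilde{k}_n}\left(\frac{\tilde{\beta}_j}{1+\exp(-\tilde{A}_j)} - \frac{\beta_j}{1+\exp(-A_j)}\right),
\]
where $A_j,\tilde{A}_j$ denote the affine pre-activations of $\mu$ and $\tilde{\mu}$. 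Membership $\tilde{\mu}\in M_\varsigma$ gives $|\tilde{\theta}_i-\theta_i|\le\varsigma$ in every coordinate, and since $\mu$ is a fixed network, the weights $\beta_j$ (and hence $|\tilde{\beta}_j|\le|\beta_j|+\varsigma$) are bounded by fixed constants.

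Next I would apply, term by term, the decomposition and inequalities developed around \ref{exp_diff}: adding and subtracting $\beta_j$ in each numerator splits the summand into a $|\tilde{\beta}_j-\beta_j|$ piece controlled by $\psi\le 1$, and a piece $|\tilde{\beta}_j|\,|\psi(\tilde{A}_j)-\psi(A_j)|$ controlled, via the calculus inequality $1-e^{-z}\le z$ together with $|x_h|\le 1$, by $|\tilde{\beta}_j|\,|\tilde{A}_j-A_j|\le|\tilde{\beta}_j|(p+1)\varsigma$. Collecting the pieces yields
\[
\sup_{x\in\mathcal{X}}|\tilde{\mu}(x)-\mu(x)| \le \varsigma + \tilde{k}_n\varsigma + (p+1)\varsigma\sum_{j=1}^{\tilde{k}_n}|\tilde{\beta}_j|,
\]
and since $\sum_{j=1}^{\tilde{k}_n}|\tilde{\beta}_j|\le\sum_{j=1}^{k}|\beta_j| + \tilde{k}_n\varsigma$, for $\varsigma\le 1$ the right-hand side takes the form $\varsigma\,[\,c_0\tilde{k}_n + c_1\,]$, with $c_0=p+2$ and $c_1=1+(p+1)\sum_{j\le k}|\beta_j|$ depending only on $p$ and the fixed weights of $\mu$.

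Finally, because $\tilde{k}_n=O(n^a)$ and $n^a\to\infty$, for all sufficiently large $n$ the constant $c_1$ is absorbed into the growing factor and the bracket is dominated by a fixed multiple of $n^a$, which can be taken as $5n^a$; squaring $\sup_x|\tilde{\mu}(x)-\mu(x)|\le 5n^a\varsigma$ then gives the claim. The main obstacle, and essentially the only place requiring care, is the bookkeeping for the mismatch in the number of nodes: the $\tilde{k}_n-k$ extra nodes of $\tilde{\mu}$ have their output weights forced to within $\varsigma$ of $0$, and it is the resulting total node count $\tilde{k}_n=O(n^a)$ — rather than the Lipschitz constants or the fixed weights of $\mu$, which contribute only $O(1)$ — that produces the $n^a$ scaling. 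One should also check that the quadratic-in-$\varsigma$ remainder coming from $|\tilde{\beta}_j|\le|\beta_j|+\varsigma$ is harmless, which holds in the relevant small-$\varsigma$ regime where $\varsigma^2\le\varsigma$.
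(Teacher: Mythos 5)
Your overall strategy---padding $\mu$ out to $\tilde{k}_n$ nodes and bounding $\sup_x\abs{\tilde{\mu}(x)-\mu(x)}$ by a Lipschitz-in-parameters argument using $\psi\le 1$, the bound $\abs{\psi(\tilde{A}_j)-\psi(A_j)}\le\abs{\tilde{A}_j-A_j}$, and $\abs{x_h}\le 1$---is exactly the argument of Lee (2000, Lemma~6), which is all the paper's own proof consists of (a one-line citation). The gap is in your final step, and it is caused by your choice of add-and-subtract. Because you place $\abs{\tilde{\beta}_j}$ (rather than the fixed $\abs{\beta_j}$) on the sigmoid-difference term, your intermediate bound is $\sup_x\abs{\tilde{\mu}-\mu}\le\varsigma\bigl[1+\tilde{k}_n+(p+1)\sum_{j\le k}\abs{\beta_j}+(p+1)\tilde{k}_n\varsigma\bigr]$, and the restriction $\varsigma\le 1$ only makes the quadratic term \emph{comparable} to $\tilde{k}_n\varsigma$, not negligible: as you note, the bracket becomes $(p+2)\tilde{k}_n+c_1$. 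Since $(p+2)n^a>5n^a$ whenever $p\ge 4$, the bracket is indeed ``a fixed multiple of $n^a$,'' but that multiple cannot ``be taken as $5n^a$''; the stated conclusion with the explicit constant $5$ does not follow from your bound. This constant is not cosmetic: the proof of \autoref{thm-1} invokes the lemma with $\varsigma=\sqrt{\epsilon}/(5n^a)$ precisely so that $\sup_x(\tilde{\mu}-\mu)^2\le\epsilon$, so a proof of the lemma must actually deliver the $5$.

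The repair is the opposite split, $\tilde{\beta}_j\psi(\tilde{A}_j)-\beta_j\psi(A_j)=(\tilde{\beta}_j-\beta_j)\psi(\tilde{A}_j)+\beta_j\bigl(\psi(\tilde{A}_j)-\psi(A_j)\bigr)$, which keeps the \emph{fixed} weights on the Lipschitz piece. Then no $\varsigma^2$ term appears and one gets $\sup_x\abs{\tilde{\mu}-\mu}\le\varsigma\bigl[1+\tilde{k}_n+(p+1)\sum_{j\le k}\abs{\beta_j}\bigr]$, where the only quantity growing with $n$ is $\tilde{k}_n\le n^a$ (this is how the $O(n^a)$ hypothesis is meant, matching $k_n\le n^a$ in \autoref{thm-1}); hence the bracket is at most $5n^a$ for all sufficiently large $n$, for \emph{every} fixed $\varsigma>0$, and for every $p$. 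This matters because the lemma quantifies over a fixed $\varsigma$ and ``sufficiently large $n$,'' so a proof should not need $\varsigma$ small. Your version can alternatively be salvaged by demanding $\varsigma\le 3/(p+1)$, or by observing that in the paper's only application $\varsigma=\sqrt{\delta/50}\,n^{-a}\to 0$ so the quadratic term is $o(\tilde{k}_n\varsigma)$; but as written, with only $\varsigma^2\le\varsigma$, the last step fails once $p\ge 4$.
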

\begin{proof}
The proof is same as the proof of Lemma 6 from \citep[p. 638-639]{Lee-2000}.
\end{proof}

\clearpage


\section{Posterior Consistency Theorem Proofs} 

\label{AppendixB} 

\subsection{\autoref{thm-2} Proof}\label{thm2_proof}
For the proof of \autoref{thm-2} and Corollary \ref{corollary}, we use the following notations. From \autoref{rnf}, recall that
\begin{equation*}
    R_n(f) = \prod_{i=1}^n \frac{f(\bm{x}_i,y_i)}{f_0(\bm{x}_i,y_i)}
\end{equation*}
is the ratio of likelihoods under neural network density $f$ and the true  density $f_0$. Also, $\F_n$ is the sieve as defined in \autoref{sieve}. Finally, $A_\epsilon$ is the Hellinger neighborhood of the true density $f_0$ as in \autoref{Hellinger neighborhood}.

By Lemma \autoref{lemma-3}, there exists a constant $c_2$ such that $\int_{A_\epsilon^c} R_n(f) \d \pi_n(f) < \exp(-nr/2) + \exp(-nc_2\epsilon^2)$ for sufficiently large n. Further, from Lemma \autoref{lemma-4}, $\int R_n(f) \d \pi_n(f) \geq \exp(-n\varsigma)$ for sufficiently large n.
\begin{align*}\label{compliment thm-2}
    P(A_\epsilon^c |(\bm{X_1},Y_1),\dots,(\bm{X_n},Y_n)) & = \frac{{\displaystyle\int_{A_\epsilon^c} R_n(f) \d \pi_n(f)}}{{\displaystyle\int  R_n(f) \d \pi_n(f)}}\nonumber \\
    & < \frac{\exp \left( - \frac{nr}{2} \right)+ \exp(-nc_2\epsilon^2)}{\exp(-n\varsigma)} \\
    & = \exp\left(-n \left[\frac{r}{2} -\varsigma \right] \right) + \exp \left( -n\epsilon^2 [c_2 - \varsigma] \right)
\end{align*}
Now we will pick $\varsigma$ such that for $\varphi>0$, both $\frac{r}{2}-\varsigma > \varphi$ and $c_2 -  \varsigma > \varphi$. Thus,
\begin{equation*}
     P(A_\epsilon^c |(\bm{X_1},Y_1),\dots,(\bm{X_n},Y_n)) \leq \exp (-n\varphi) + \exp(-n\epsilon^2\varphi)
\end{equation*}
Hence, $P(A_\epsilon^c  |(\bm{X_1},Y_1), \dots, (\bm{X_n},Y_n)) \overset{p}{\to} 0$.
\qed

\subsection{Corollary \ref{corollary} Proof} \label{corollary_proof}
\autoref{thm-2} implies that $D_H(f_0,f) \overset{p}{\to} 0$ where $D_H(f_0,f)$ is the Hellinger distance between $f_0$ and $f$ as in \autoref{Hellinger_distance} and $f$ is a random draw from the posterior. Recall from \autoref{predictive density}, the predictive density function \begin{equation*}
    \hat{f}_n(.) = \int f(.)\hspace{1mm}\d P(f|(\bm{X_1},Y_1),\dots,(\bm{X_n},Y_n))
\end{equation*}
gives rise to the predictive conditional quantile function,  $\hat{\mu}_n(\bm{x}) = Q_{\tau,\hat{f}_n}(y |\bm{X}=\bm{x})$. We next show that $D_H(f_0,\hat{f}_n) \overset{p}{\to} 0$, which in turn implies $\hat{\mu}_n(\bm{x})$ converges in $L_1$-norm to the true conditional quantile function,
\begin{equation*}
\mu_0(\bm{x}) = Q_{\tau,f_0}(y |\bm{X}=\bm{x}) = \beta_0 + \sum_{j=1}^k \beta_j \frac{1}{1 + \exp \left( -\gamma_{j0}- \sum_{h=1}^p \gamma_{jh} x_{ih} \right)} 
\end{equation*} 
First we show that $D_H(f_0,\hat{f}_n) \overset{p}{\to} 0$. Let $X^n = ((\bm{X_1},Y_1),\dots,(\bm{X_n},Y_n))$. For any $\epsilon >0$:
\begin{align*}
    D_H(f_0,\hat{f}_n) & \leq \int D_H(f_0,f) \hspace{1mm} \d \pi_n(f|X^n) \\
    & \quad \text{By Jensen's Inequality} \\
    & \leq \int_{A_\epsilon} D_H(f_0,f) \hspace{1mm} \d \pi_n(f|X^n) + \int_{A_\epsilon^c} D_H(f_0,f) \hspace{1mm} \d \pi_n(f|X^n) \\
    & \leq \int_{A_\epsilon} \epsilon \hspace{1mm} \d \pi_n(f|X^n) + \int_{A_\epsilon^c} D_H(f_0,f) \hspace{1mm} \d \pi_n(f|X^n) \\
    & \leq \enskip \epsilon + \int_{A_\epsilon^c} D_H(f_0,f) \hspace{1mm} \d \pi_n(f|X^n)
\end{align*}
The second term goes to zero in probability by \autoref{thm-2} and $\epsilon$ is arbitrary, so $D_H(f_0,\hat{f}_n) \overset{p}{\to} 0$. 

In the remaining part of the proof, for notational simplicity, we take $\hat{\mu}_n(\bm{x})$ and $\mu_0(\bm{x})$ to be $\hat{\mu}$ and $\hat{\mu}_0$ respectively. The Hellinger distance between $f_0$ and $\hat{f}_n$ is  
\begin{align}
    D_H(f_0,\hat{f}_n) & = \left( \iint \left[ \sqrt{\hat{f}_n(\bm{x},y)} - \sqrt{f_0(\bm{x},y)} \right]^2 \hspace{1mm} \d y \hspace{1mm} \d x \right)^{1/2} \nonumber \\
    & = \left( \iint \tau(1-\tau) \left[ \exp \left(-\frac{1}{2}(y-\hat{\mu}_n) (\tau-I_{(y\leq \hat{\mu}_n)}) \right) - \exp \left(-\frac{1}{2}(y-\mu_0) (\tau-I_{(y\leq\mu_0)}) \right) \right]^2 \hspace{1mm} \d y \hspace{1mm} \d \bm{x} \right)^{1/2} \nonumber \\
    & = \left(2 - 2 \iint \tau(1-\tau) \exp \left(-\frac{1}{2}(y-\hat{\mu}_n) (\tau-I_{(y\leq \hat{\mu}_n)}) -\frac{1}{2}(y-\mu_0) (\tau-I_{(y\leq\mu_0)}) \right) \hspace{1mm} \d y \hspace{1mm} \d \bm{x} \right)^{1/2} \nonumber \\
    & \quad \text{let,} \enskip T = -\frac{1}{2}(y-\hat{\mu}_n) (\tau-I_{(y\leq \hat{\mu}_n)}) -\frac{1}{2}(y-\mu_0) (\tau-I_{(y\leq\mu_0)}) \nonumber \\
    & = \left(2 - 2 \iint \tau(1-\tau) \exp \left(T \right) \hspace{1mm} \d y \hspace{1mm} \d \bm{x} \right)^{1/2} \label{Hellinger distance predictive}
\end{align}
Now let's break $T$ into two cases: (a)\hspace{1mm}$\hat{\mu}_n \leq \mu_0$, and (b)\hspace{1mm}$\hat{\mu}_n > \mu_0$.
\vspace{-2mm}
\begin{enumerate}[label=Case-(\alph*),wide=0pt,leftmargin=.1in] 
\setlength\itemsep{1em}
    \item $\hat{\mu}_n \leq \mu_0$
        \begin{align*}
             T = \begin{cases} 
                 - \left(y - \frac{\hat{\mu}_n+\mu_0}{2} \right) \tau,  & \hat{\mu}_n \leq \mu_0 < y \\  
                 - \left(y - \frac{\hat{\mu}_n+\mu_0}{2} \right) \tau + \frac{(y-\mu_0)}{2}, & \hat{\mu}_n \leq \frac{\hat{\mu}_n+\mu_0}{2} < y \leq \mu_0 \\
                 - \left(y - \frac{\hat{\mu}_n+\mu_0}{2} \right) (\tau-1) - \frac{(y-\hat{\mu}_n)}{2},   & \hat{\mu}_n  < y \leq \frac{\hat{\mu}_n+\mu_0}{2} \leq \mu_0 \\
                 - \left(y - \frac{\hat{\mu}_n+\mu_0}{2} \right) (\tau-1), &  y \leq \hat{\mu}_n \leq \mu_0
                \end{cases} 
        \end{align*}
    \item $\hat{\mu}_n > \mu_0$ 
        \begin{align*}
             T = \begin{cases} 
                 - \left(y - \frac{\hat{\mu}_n+\mu_0}{2} \right) \tau,  & \mu_0 \leq \hat{\mu}_n < y \\  
                 - \left(y - \frac{\hat{\mu}_n+\mu_0}{2} \right) \tau + \frac{(y-\hat{\mu}_n)}{2}, &  \mu_0 \leq \frac{\hat{\mu}_n+\mu_0}{2} < y \leq \hat{\mu}_n \\
                 - \left(y - \frac{\hat{\mu}_n+\mu_0}{2} \right) (\tau-1) - \frac{(y-\mu_0)}{2}, & \mu_0 < y \leq \frac{\hat{\mu}_n+\mu_0}{2} \leq \hat{\mu}_n \\
                 - \left(y - \frac{\hat{\mu}_n+\mu_0}{2} \right) (\tau-1), &  y \leq \mu_0 \leq \hat{\mu}_n
                \end{cases} 
        \end{align*}    
\end{enumerate}
Hence now,
\begin{align*}
    & \int \tau(1-\tau) \exp \left(T \right) \hspace{1mm} \d y \\
    & = \int \left[I_{(\hat{\mu}_n \leq \mu_0)} + I_{( \hat{\mu}_n > \mu_0)} \right] \tau(1-\tau) \exp \left(T \right) \d y \\
    & = I_{(\hat{\mu}_n \leq \mu_0)} \tau(1-\tau) \times \left[ \int_{\mu_0}^\infty \exp \left\{- \left(y - \frac{\hat{\mu}_n+\mu_0}{2} \right) \tau \right\} \hspace{1mm} \d y + \int_{\frac{\hat{\mu}_n+\mu_0}{2}}^{\mu_0} \exp \left\{- \left(y - \frac{\hat{\mu}_n+\mu_0}{2} \right) \tau + \frac{(y-\mu_0)}{2} \right\} \hspace{1mm} \d y \right. \\
    & \quad \left. + \int_{\hat{\mu}_n}^{\frac{\hat{\mu}_n+\mu_0}{2}}\exp \left\{- \left(y - \frac{\hat{\mu}_n+\mu_0}{2} \right) (\tau-1) - \frac{(y-\hat{\mu}_n)}{2} \right\} \hspace{1mm} \d y + \int_{-\infty}^{\hat{\mu}_n} \exp \left\{- \left(y - \frac{\hat{\mu}_n+\mu_0}{2} \right) (\tau-1) \right\} \hspace{1mm} \d y \right] \\
    & \quad + I_{(\hat{\mu}_n > \mu_0)} \tau(1-\tau) \times \left[ \int_{\hat{\mu}_n}^\infty \exp \left\{- \left(y - \frac{\hat{\mu}_n+\mu_0}{2} \right) \tau \right\} \hspace{1mm} \d y + \int_{\frac{\hat{\mu}_n+\mu_0}{2}}^{\hat{\mu}_n} \exp \left\{- \left(y - \frac{\hat{\mu}_n+\mu_0}{2} \right) \tau +  \frac{(y-\hat{\mu}_n)}{2} \right\} \hspace{1mm} \d y \right. \\
    & \quad \left. + \int_{\mu_0}^{\frac{\hat{\mu}_n+\mu_0}{2}}\exp \left\{- \left(y - \frac{\hat{\mu}_n+\mu_0}{2} \right) (\tau-1) - \frac{(y-\mu_0)}{2} \right\} \hspace{1mm} \d y + \int_{-\infty}^{\mu_0} \exp \left\{- \left(y - \frac{\hat{\mu}_n+\mu_0}{2} \right) (\tau-1) \right\} \hspace{1mm} \d y \right] \\
    & = \frac{1-\tau}{1-2\tau}\exp \left(-\frac{\abs{\hat{\mu}_n-\mu_0}}{2} \tau \right) - \frac{\tau}{1-2\tau} \exp \left(-\frac{\abs{\hat{\mu}_n-\mu_0}}{2} (1-\tau) \right)
\end{align*}
Substituting the above expression in \autoref{Hellinger distance predictive} we get,
\begin{equation*}
    D_H(f_0,\hat{f}_n) = \left(2 - 2 \int \left[ \frac{1-\tau}{1-2\tau}\exp \left(-\frac{\abs{\hat{\mu}_n-\mu_0}}{2} \tau \right) - \frac{\tau}{1-2\tau} \exp \left(-\frac{\abs{\hat{\mu}_n-\mu_0}}{2} (1-\tau) \right) \right] \d \bm{x} \right)^{1/2}
\end{equation*}
Since $D_H(f_0,\hat{f}_n) \overset{p}{\to} 0$,
\begin{equation*}
    \int \left[ \frac{1-\tau}{1-2\tau}\exp \left(-\frac{\abs{\hat{\mu}_n-\mu_0}}{2} \tau \right) - \frac{\tau}{1-2\tau} \exp \left(-\frac{\abs{\hat{\mu}_n-\mu_0}}{2} (1-\tau) \right) \right] \d \bm{x} \overset{p}{\to} 1
\end{equation*}
Our next step is to show that above expression implies that $\abs{\hat{\mu}_n-\mu_0} \to 0$ a.s. on a set $\Omega$, with probability tending to 1, and hence $\int \abs{\hat{\mu}_n-\mu_0} \d \bm{x} \overset{p}{\to} 0$.

We are going to prove this using contradiction technique. Suppose that, $\abs{\hat{\mu}_n-\mu_0} \nrightarrow 0$ a.s. on $\Omega$. Then, there exists an $\epsilon > 0$ and a subsequence $\hat{\mu}_{n_i}$ such that $\abs{\hat{\mu}_{n_i}-\mu_0} > \epsilon$ on a set $A$ with $P(A)>0$. Now decompose the integral as
\begin{align*}
    \int & \left[ \frac{1-\tau}{1-2\tau}\exp \left(-\frac{\abs{\hat{\mu}_n-\mu_0}}{2} \tau \right) - \frac{\tau}{1-2\tau} \exp \left(-\frac{\abs{\hat{\mu}_n-\mu_0}}{2} (1-\tau) \right) \right] \d \bm{x} \\
    & = \int_A \left[ \frac{1-\tau}{1-2\tau}\exp \left(-\frac{\abs{\hat{\mu}_n-\mu_0}}{2} \tau \right) - \frac{\tau}{1-2\tau} \exp \left(-\frac{\abs{\hat{\mu}_n-\mu_0}}{2} (1-\tau) \right) \right] \d \bm{x} \\
    & \quad + \int_{A^c} \left[ \frac{1-\tau}{1-2\tau}\exp \left(-\frac{\abs{\hat{\mu}_n-\mu_0}}{2} \tau \right) - \frac{\tau}{1-2\tau} \exp \left(-\frac{\abs{\hat{\mu}_n-\mu_0}}{2} (1-\tau) \right) \right] \d \bm{x} \\
    & \leq \underbrace{P(A)}_{>0} \underbrace{\left[ \frac{(1-\tau)\exp(-\epsilon\tau/2) - \tau\exp(-\epsilon(1-\tau)/2)}{1-2\tau} \right]}_{<1 \enskip \text{(max $=1$ for $\epsilon=0$) and strictly $\downarrow$ for } \epsilon \in (0,\infty)} + \underbrace{P(A^c)}_{<1} \hspace{1mm} < 1
\end{align*}
So we have a contradiction since the integral converges in probability to 1. Thus $\abs{\hat{\mu}_n-\mu_0} \to 0$ a.s. on $\Omega$. Once we apply Scheffe's theorem we get $\int \abs{\hat{\mu}_n-\mu_0} \d \bm{x} \to 0$ a.s. on $\Omega$ and hence $\int \abs{\hat{\mu}_n-\mu_0} \d \bm{x} \overset{p}{\to} 0$. \qed


Below we prove the \autoref{thm-1} and for that we make use of \autoref{thm-2} and Corollary \ref{corollary}.

\subsection{\autoref{thm-1} Proof} \label{thm1_proof}
We proceed by showing that with $\F_n$ as in \autoref{sieve}, the prior $\pi_n$ of \autoref{thm-1} satisfies the condition (i) and (ii) of \autoref{thm-2}.

The proof of \autoref{thm-2} condition-(i) presented in \citet[proof of Theorem~1 on p. 639]{Lee-2000} holds in BQRNN case without any change. Next we need to show that condition-(ii) holds in BQRNN model. Let $K_\delta$ be the KL-neighborhood of the true density $f_0$ as in \autoref{KL neighborhood} and $\mu_0$ the corresponding conditional quantile function. We first fix a closely approximating neural network $\mu^*$ of $\mu_0$. We then find a neighborhood $M_\varsigma$ of $\mu^*$ as in \autoref{mdef} and show that this neighborhood has sufficiently large prior probability. Suppose that $\mu_0$ is continuous. For any $\delta >0$, choose $\epsilon = \delta/2$ in theorem from \citet[Theorem~1 on p.184]{Funahashi-1989} and let $\mu^*$ be a neural network such that $\underset{x\in \mathcal{X}}{\sup}\abs{\mu^*-\mu_0} < \epsilon$. Let $\varsigma=(\sqrt{\epsilon}/5n^a)=\sqrt{(\delta/50)}n^{-a}$ in Lemma \ref{lemma-5}. Then following derivation will show us that for any $\tilde{\mu}\in M_\varsigma, D_K(f_0,\tilde{f})\leq \delta$ i.e. $M_\varsigma \subset K_\delta$. 
\begin{align*}
    D_K(f_0,\tilde{f}) & = \iint f_0(x,y) \log \frac{f_0(x,y)}{\tilde{f}(x,y)} \hspace{1mm} \d y \hspace{1mm} \d x  \\
    & = \iint \left[(y-\tilde{\mu}) (\tau-I_{(y\leq\tilde{\mu})}) - (y-\mu_0)(\tau-I_{(y\leq \mu_0)}) \right] f_0(y|x) \hspace{1mm} f_0(x)\hspace{1mm} \d y \hspace{1mm} \d x  \\
    & \quad \text{let,} \enskip T = (y-\tilde{\mu}) (\tau-I_{(y\leq\tilde{\mu})}) - (y-\mu_0)(\tau-I_{(y\leq \mu_0)}) \\  
    & = \int \left[ \int T f_0(y|x) \hspace{1mm} \d y \right]f_0(x) \hspace{1mm} \d x 
\end{align*}
Now let's break $T$ into two cases: (a)\hspace{1mm}$\tilde{\mu} \geq \mu_0$, and (b)\hspace{1mm} $\tilde{\mu} < \mu_0$.
\vspace{-2mm}
\begin{enumerate}[label=Case-(\alph*),wide=0pt,leftmargin=.1in] 
\setlength\itemsep{1em}
    \item $\tilde{\mu} \geq \mu_0$
        \begin{align*}
             T = \begin{cases} 
                 (\mu_0-\tilde{\mu})\tau,  & \mu_0 \leq \tilde{\mu} < y \\  
                 (\mu_0-\tilde{\mu})\tau-(y-\tilde{\mu}), &  \mu_0 < y \leq \tilde{\mu} \\
                 (\mu_0-\tilde{\mu})(\tau-1), &  y \leq \mu_0 \leq \tilde{\mu}
                \end{cases} 
        \end{align*}
    \item $\tilde{\mu} \leq \mu_0$ 
        \begin{align*} 
             T = \begin{cases} 
                (\mu_0-\tilde{\mu})\tau,  & \tilde{\mu} \leq \mu_0 < y \\  
                (\mu_0-\tilde{\mu})(\tau-1)+(y-\tilde{\mu}), &  \tilde{\mu} < y \leq \mu_0 \\
                (\mu_0-\tilde{\mu})(\tau-1), &  y \leq \tilde{\mu} \leq \mu_0
                \end{cases} 
        \end{align*}
\end{enumerate}
So now,
\begin{align*}
    \int T f_0(y|x) \hspace{1mm} \d y & = \int \left[ I_{(\tilde{\mu} - \mu_0 \geq 0)} \times \left\{ (\tilde{\mu}-\mu_0)(1-\tau)I_{(y\leq \mu_0)} - (y-\tilde{\mu})I_{(\mu_0 < y \leq \tilde{\mu})} - (\tilde{\mu}-\mu_0)\tau I_{(y > \mu_0)} \right\} \right. \\
    & \hspace{6mm} \left. + I_{(\tilde{\mu} - \mu_0 < 0)} \times \left\{ (\tilde{\mu}-\mu_0)(1-\tau)I_{(y\leq \mu_0)} + (y-\tilde{\mu})I_{(\tilde{\mu} < y \leq \mu_0)} - (\tilde{\mu}-\mu_0)\tau I_{(y > \mu_0)} \right\} \right] f_0(y|x) \hspace{1mm} \d y \\
    & = \int \left[(\tilde{\mu}-\mu_0)(1-\tau)I_{(y\leq \mu_0)} - (\tilde{\mu}-\mu_0)\tau I_{(y > \mu_0)} \right.\\
    & \hspace{6mm} \left. - (y-\mu_0+\mu_0-\tilde{\mu})I_{(\mu_0 < y \leq \tilde{\mu})} + (y-\mu_0+\mu_0-\tilde{\mu})I_{(\tilde{\mu} < y \leq \mu_0)} \right] f_0(y|x) \hspace{1mm} \d y \\
    & \quad \text{let,} \enskip z = y - \mu_0, b=\tilde{\mu}-\mu_0 \enskip \text{and note that} \enskip P(y\leq \mu_0|x)=\tau, \text{ and } P(y>\mu_0|x)=1-\tau.\\
    & = E \left[-(z-b)I_{(0<z<b)} + (z-b)I_{(b<z<0)}|x \right] \\
    & \leq E \left[bI_{(0<z<b)} - bI_{(b<z<0)}|x \right] \\
    & = \abs{b} \times \left[P(0<z<b|x) + P(b<z<0|x) \right] \\
    & = \abs{b} \times P(0<\abs{z}<\abs{b}|x) \\
    & \leq \abs{b}
\end{align*}
Hence,
\begin{align*}
    \iint T f_0(y|x) \hspace{1mm} \d y \hspace{1mm} \d x & \leq \int \abs{b} f_0(x) \hspace{1mm} \d x \\
    & = \int \abs{\tilde{\mu}-\mu_0} f_0(x) \hspace{1mm} \d x \\
    & = \int \abs{\tilde{\mu}-\mu^*+\mu^*-\mu_0} f_0(x) \hspace{1mm} \d x \\
    & \leq \int \left[\underset{x\in \mathcal{X}}{\sup}\abs{\tilde{\mu}-\mu^*} + \underset{x\in \mathcal{X}}{\sup}\abs{\mu^*-\mu_0} \right] f_0(x) \hspace{1mm} \d x \\
     \enskip \text{Use Lemma \ref{lemma-5}  and}&\text{ \citet[Theorem~1 on p.184]{Funahashi-1989} to bound the first and second term respectively.} \\
    & \leq \int \left[\epsilon+\epsilon \right] f_0(x) \hspace{1mm} \d x \\
    & = 2\epsilon = \delta
\end{align*}
Finally we prove that $\forall \delta,\nu>0, \exists N_\nu$ s.t. $\pi_n(K_\delta) \geq \exp(-n\nu) \hspace{1mm} \forall n\geq N_\nu$,
\begin{align*}
    \pi_n(K_\delta) & \geq \pi_n(M_\varsigma) \\
    & = \prod_{i=1}^{\tilde{d}_n} \int_{\theta_i-\varsigma}^{\theta_i+\varsigma} \frac{1}{\sqrt{2\pi \sigma_0^2}} \exp \left(-\frac{1}{2\sigma_0^2} u^2 \right) \d u \\
    & \geq \prod_{i=1}^{\tilde{d}_n} 2\varsigma \underset{u\in [\theta_i-1,\theta_i+1]}{\inf} \frac{1}{\sqrt{2\pi \sigma_0^2}} \exp \left(-\frac{1}{2\sigma_0^2} u^2 \right) \\
    & = \prod_{i=1}^{\tilde{d}_n} \varsigma \sqrt{\frac{2}{\pi \sigma_0^2}}\exp \left(-\frac{1}{2\sigma_0^2} \vartheta_i \right) \\
    & \quad \vartheta_i = \max((\theta_i-1)^2,(\theta_i+1)^2) \\
    & \geq \left(\varsigma  \sqrt{\frac{2}{\pi \sigma_0^2}} \right)^{\tilde{d}_n} \exp \left( -\frac{1}{2\sigma_0^2} \vartheta\tilde{d}_n \right) \qquad \text{where, } \vartheta = \underset{i}{\max}(\vartheta_1,\dots,\vartheta_{\tilde{d}_n}) \\
    & = \exp \left(-\tilde{d}_n \left[ a\log n - \log \sqrt{\frac{\delta}{25\pi\sigma_0^2}} \right] -\frac{1}{2\sigma_0^2} \vartheta\tilde{d}_n \right) \\
    & \quad \varsigma = \sqrt{\frac{\delta}{50}} n^{-a} \\
    & \geq \exp\left( -\left[ 2a \log n + \frac{\vartheta }{2\sigma_0^2} \right] \tilde{d}_n \right) \qquad \text{for large } n \\
    & \geq \exp\left( -\left[ 2a \log n + \frac{\vartheta }{2\sigma_0^2} \right] (p+3)n^a \right) \\
    & \quad \tilde{d}_n = (p+2)\tilde{k}_n + 1 \leq (p+3)n^a \\
    & \geq \exp (-n \nu) \qquad \text{for any } \nu \text{ and } \forall n \geq N_\nu \text{ for some } N_\nu
\end{align*}
Hence, we have proved that both the conditions of \autoref{thm-2} hold. The result of \autoref{thm-1} thereby follows from the Corollary \ref{corollary} which is derived from \autoref{thm-2}.


\end{document}